\journal{Applied and Computational Harmonic Analysis}
\def\RR{{\mathbb{R}}}
\def\TT{{\mathbb{T}}}
\def\C{{\mathbb{C}}}
\def\R{{\mathbb{R}^d}}
\def\Z{{\mathbb{Z}^d}}
\def\Zplus{{\mathbb{Z}^{+}}}
\def\N{{\mathbb{N}^d}}
\def\NN{{\mathbb{N}}}
\def\bump{{\cC^{\infty}_{\rm c}(\R)}}
\def\Hd{{H\"older's}}
\def\d{{\rm d}}
\def\e{{\rm e}}
\def\j{{\rm j}}
\def\dual{{\varphi_{\rm d}}}
\def\bx{{\boldsymbol{x}}}
\def\by{{\boldsymbol{y}}}
\def\bu{{\boldsymbol{u}}}
\def\bomega{{\boldsymbol{\omega}}}
\def\bk{{\boldsymbol{k}}}
\def\bl{{\boldsymbol{\ell}}}
\def\zv{{\boldsymbol{0}}}
\def\cS{{\mathcal{S}}}
\def\cC{{\mathcal{C}}}
\def\cF{{\mathcal{F}}}
\def\f{{f}} % the signal to be approximated
\def\J{{J}} % smoothing operator
\def\L{{L}}
\def\h{{h}}
\def\P{{P_{\varphi}}}
\def\Ph{{P_{\varphi,\h}}}
\def\I{{I_{\varphi}}}
\def\Jh{{J_{\h}}}
\def\Ih{{I_{\varphi,\h}}}
\def\fr{D^r f}
\newcommand{\defeq}{:=}
\newcommand{\Frac}[2]{{#1}/{#2} }
\newcommand{\ip}[2]{\left\langle {#1},{#2}\right\rangle}
\newcommand{\w}[2]{\left\langle {#1}\right\rangle^{#2}}
\newcommand{\norm}[1]{\left\|{#1}\right\|}
\newcommand{\wn}[1]{\left\|{#1}\right\|_{L_{p,-\alpha}(\R)}} % weighted norm
\DeclareMathOperator{\supp}{supp}
\DeclareMathOperator{\ess}{ess}
\newtheorem{thm}{Theorem}
\newtheorem{prop}{Proposition}
\newtheorem{lem}{Lemma}
\def\@author#1{\g@addto@macro\elsauthors{\normalsize%
		\def\baselinestretch{1}%
		\upshape\authorsep#1\unskip\textsuperscript{%
			\ifx\@fnmark\@empty\else\unskip\sep\@fnmark\let\sep=,\fi
			\ifx\@corref\@empty\else\unskip\sep\@corref\let\sep=,\fi
		}%
		\def\authorsep{\unskip,\space}%
		\global\let\@fnmark\@empty
		\global\let\@corref\@empty  %% Added
		\global\let\sep\@empty}%
	\@eadauthor={#1}
}
\begin{document}

\begin{frontmatter}
	
	\title{Approximation of Non-Decaying Signals From Shift-Invariant Subspaces\tnoteref{t1}}
	\tnotetext[t1]{This research was funded by the European Research Council under
		the European Union's Seventh Framework Programme (FP7/2007-2013) / ERC Grant Agreement No. 267439 and the Swiss National Science Foundation under Grant 200020-162343.}
	
	\author{Ha Q. Nguyen\corref{corr}}
	\cortext[corr]{Corresponding author}
	\ead{ha.nguyen@epfl.ch}
	
	\author{Michael Unser}
	\ead{michael.unser@epfl.ch}
	
	\address{Biomedical Imaging Group, \'{E}cole Polytechnique F\'{e}d\'{e}rale de Lausanne (EPFL), Station 17, CH-1015, Lausanne, Switzerland}
	
	\begin{abstract}
		In our recent work, the sampling and reconstruction of non-decaying signals, modeled as members of weighted-$L_p$ spaces, were shown to be stable with an appropriate choice of the generating kernel for the shift-invariant reconstruction space. 
		In this paper, we extend the Strang-Fix theory to show that, for $d$-dimensional signals whose derivatives up to order $L$ are all in some weighted-$L_p$ space, the weighted norm of the approximation error can be made to go down as $O(h^L)$ when the sampling step $h$ tends to $0$. The sufficient condition for this decay rate is that the generating kernel belongs to a particular hybrid-norm space and satisfies the Strang-Fix conditions of order $L$. We show that the $O(h^L)$ behavior of the error is attainable for both approximation schemes using projection (when the signal is prefiltered with the dual kernel) and interpolation (when a prefilter is unavailable). The requirement on the signal for the interpolation method, however, is slightly more stringent than that of the projection because we need to increase the smoothness of the signal by a margin of $d/p+\varepsilon$, for arbitrary $\varepsilon >0$. This extra amount of derivatives is used to make sure that the direct sampling is stable. 
	\end{abstract}
	
	\begin{keyword}
		approximation theory\sep
		Strang-Fix conditions\sep
		shift-invariant spaces\sep
		spline interpolation\sep
		weighted $L_p$ spaces\sep
		weighted Sobolev spaces\sep
		hybrid-norm spaces
	\end{keyword}
	
\end{frontmatter}

\section{Introduction}
Sampling and reconstruction are important in signal processing because they provide an insightful connection between analog signals and their discrete representations. In the sampling procedure, oftentimes, a continuous-domain signal $f:\R\mapsto \C$ is uniformly sampled (with or without a \emph{prefilter}) at multi-integer multiples of some sampling step $h$ to produce a discrete-domain signal $c:\Z\mapsto \C$. The reconstruction, on the other hand, is commonly done by interpolating the samples $\{c[\bk]\}_{\bk\in\Z}$ with scaled and shifted copies of some kernel (generating function) $\varphi$ positioned on the grid $h\Z$. Precisely, the reconstructed signal  takes the (integer) shift-invariant form
\begin{align}\label{eq:interp}
	\tilde{f}(\bx) = \sum_{\bk\in\Z}c[\bk]\varphi\left(\frac{\bx}{\h}-\bk\right).
\end{align}
This interpolation model has been extensively used in the theory of splines~\cite{Schoenberg:1946,Schoenberg:1973,deBoor:1978,Schumaker:1981}. It is general enough to include
the celebrated reconstruction formula in Shannon's sampling theorem~\cite{Shannon:1949} in which the kernel $\varphi$ is replaced with the sinc function. Although the sinc-based interpolation guarantees exact recovery of bandlimited signals (or signals prefiltered with an ideal lowpass filter) whenever $1/h$ exceeds Nyquist's rate, the slow decay of sinc$(\bx)$ unfortunately prevents the application of this method in practice~\cite{Unser:2000}. For other choices of $\varphi$ with better localization properties, such as splines, exact reconstruction is no longer achievable but the quality of the approximation of a signal $f$ by such $\tilde{f}$ given in~\eqref{eq:interp} can be characterized as a power of the sampling step $h$ via the Strang-Fix theory. Specifically, in early 1970's, Strang and Fix~\cite{StrangF:1971} extended Schoenberg's work~\cite{Schoenberg:1946} and introduced the concept of \emph{controlled} approximation in which the $\ell_2$-norm of the sampled coefficients is bounded by the $L_2$-norm of the original signal. They showed that, for compactly supported $\varphi$, the error of the controlled approximation is bound as
\begin{align}\label{eq:S-F}
	\forall f\in H_2^{L}(\R),\quad\min_{c}\norm{f-\tilde{f}}_{L_2{(\R)}}\leq C_{\varphi,L}\cdot h^L\cdot\norm{f^{(L)}}_{L_2(\R)}, \quad\text{as } h\rightarrow 0,
\end{align}
\emph{if and only if}  $\varphi$ satisfies the Strang-Fix conditions of order $L$ so that the representation~\eqref{eq:interp} is able to reproduce all polynomials of degree less than $L$; this notion will be clarified later in Section~\ref{sec:Strang-Fix}. Here, $f^{(L)}$ is the $L$th derivative\footnote{To be precise, when $f$ is multivariate, $f^{(L)}$ is the summation of (the moduli of) all partial derivatives of order $L$ of $f$.} of $f$ and $H^L_2(\R)$ is the Sobolev space of $L_2$ functions whose first $L$ derivatives are all in $L_2(\R)$. The order $L$ in~\eqref{eq:S-F} is referred to in the literature as the order (power) of approximation. 

The original result of Strang and Fix has been extended in various directions,  including controlled $L_p$-approximation with globally supported (multi-) kernel~\cite{deBoorJ:1985,LightC:1992,JiaL:1993,Lei:1994}, uncontrolled $L_2$-approximation~\cite{deBoorDR:1994}, and finer estimations of the $L_2$-approximation error~\cite{Unser:1996,UnserD:1997,BluU:1999a,BluU:1999b,BluU:1999c}; interested readers are also referred to the surveys~\cite{Chui:1988,deBoor:1990,Light:1991}. More recently, the Strang-Fix theory was linked to the sampling of signals with finite rate of innovation~\cite{DragottiVB:2007}. Despite a rich literature on the Strang-Fix conditions, none of the existing results allows us to deal with the approximation of non-decaying (non-$L_p$) signals, such as sample paths of a Brownian motion, which can even grow at infinity. This is an important part that seems to be missing in the theory of \emph{sparse stochastic processes} recently developed by Unser \emph{et al.}~\cite{UnserT:2014,UnserTS:2014,UnserTAK:2014}. 

In this paper, a follow-up of our recent works on the sampling theory for non-decaying signals~\cite{NguyenU:2015b,NguyenU:2015a,NguyenU:2015c}, we provide an approximation theory for such objects. Recall that we showed~in~\cite{NguyenU:2015b} that both the sampling and reconstruction of weighted-$L_p$ signals, at a fixed sampling step, are stable, provided the generating kernel $\varphi$ lies in an appropriate  \emph{hybrid-norm space}, a concept closely related to the Wiener amalgams that are frequently used in time-frequency analysis~\cite{Wiener:1933,Feichtinger:1991a,Heil:2002}. Note that, in the \emph{direct} sampling scheme, where a prefilter is absent, not only the signal is required to live in a weighted-$L_p$ space, but also its first $d/p+\varepsilon$ derivatives, for some $\varepsilon >0$. In the spirit of~\cite{NguyenU:2015b}, we model non-decaying signals in this paper as members of the weighted space $L_{p,-\alpha}(\R)$ associated with the \emph{Sobolev weight} $(1+\|\cdot\|^2)^{-\alpha/2}$, where $\alpha\geq 0$ specifies the order of growth of the signals. In particular, $f\in L_{p,-\alpha}(\R)$ if $(1+\|\cdot\|^2)^{-\alpha/2} f\in L_p(\R)$.
We then extend the classical Strang-Fix theory to the approximation of such signals
for the  two common types of shift-invariant reconstructions: projection versus (direct) interpolation. 

In the \emph{projection} scheme, which provides the optimal $L_2$-approximation, the original signal is prefiltered with the \emph{dual} kernel $h^{-d}\dual\left(-\frac{\cdot}{h}\right)$~\cite{UnserD:1997} and the coefficients $\{c[\bk]\}_{\bk\in\Z}$ in~\eqref{eq:interp} are obtained by sampling the resulting signal with step size $h$. It means that the reconstructed signal is given by
\begin{align*} %\label{eq:proj}
\tilde{f}_{\rm proj}(\bx) = \frac{1}{h^d}\sum_{\bk\in\Z}\ip{f}{\dual\left(\frac{\cdot}{h}-\bk\right)}\varphi\left(\frac{\bx}{\h}-\bk\right).
\end{align*} 
For this type of reconstruction, we show, in the first half of the paper, that if $\varphi$ belongs to an appropriate hybrid-norm space and at the same time satisfies the Strang-Fix conditions of order $L$, then the weighted-$L_p$ norm of the projection error is bounded as
\begin{align}\label{eq:weighted_proj}
\forall f\in H_{p,-\alpha}^{L}(\R),\quad\norm{f-\tilde{f}_{\rm proj}}_{L_{p,-\alpha}{(\R)}}\leq C_{\varphi,L,\alpha}\cdot h^L\cdot\norm{f^{(L)}}_{L_{p,-\alpha}(\R)}, \quad \text{as }h\rightarrow 0,
\end{align}
where the weighted Sobolev space $H^L_{p,-\alpha}(\R)$ is a collection of functions whose derivatives up to order $L$ are all in $L_{p,-\alpha}(\R)$. We want to remark that this result is the weighted version of~\cite[Theorem 2.2]{Lei:1994}.

In the \emph{interpolation} scheme, the coefficients are sampled directly from the original signal; hence the reconstructed signal takes the form
\begin{align*} %\label{eq:interp_direct}
\tilde{f}_{\rm int}(\bx) = \sum_{\bk\in\Z}f(h\bk)\,\varphi_{\rm int}\left(\frac{\bx}{\h}-\bk\right),
\end{align*} 
where $\varphi_{\rm int}$ is the \emph{interpolant} generated from the kernel $\varphi$~\cite{Unser:2000}. Similar to the projection case, we establish, in the second half of the paper, that if $\varphi$ is an element of a particular hybrid-norm space that satisfies the Strang-Fix condition of order $L$, then, given $r>d/p$, 
\begin{align}\label{eq:weighted_interp}
\forall f:\fr\in H_{p,-\alpha}^{L}(\R),\quad\norm{f-\tilde{f}_{\rm int}}_{L_{p,-\alpha}{(\R)}}\leq C_{\varphi,L,\alpha}\cdot h^L\cdot\norm{(\fr)^{(L)}}_{L_{p,-\alpha}(\R)}, \quad \text{as }h\rightarrow 0.
\end{align}
Here, $\fr$ is a combination of all \emph{fractional} derivatives up to order $r$ of $f$ defined in the frequency domain as $\fr\defeq\cF^{-1}\left\{(1+\|\cdot\|^2)^{\Frac{r}{2}}\cF f\right\}$ with $\cF$ being the Fourier transform operator. Informally speaking, the interpolation error can also be made to decay like $O(h^L)$, when $h$ tends to $0$, for functions whose derivatives up to order $L+d/p+\varepsilon$ live in some weighted-$L_p$ space, for arbitrary $\varepsilon>0$. This is not surprising because we need $d/p+\varepsilon$ derivatives to take care of the sampling, as indicated in~\cite{NguyenU:2015b}, and $L$ derivatives more to reach the target approximation order. To the best of our knowledge, the bound~\eqref{eq:weighted_interp} is new even in the unweighted $L_p$ case (when all instances of the subscript $\alpha$ disappear), although similar results exist for the direct interpolation in $L_2$~\cite{UnserD:1997} and $L_\infty$~\cite{LightC:1992}. The (unweighted) $L_p$ result presented in~\cite[Theorem 4.1]{JiaL:1993}, although similar to~\eqref{eq:weighted_interp}, does not fall into the realm of direct interpolation because the samples are taken from a smoothed version of the original signal. 

One of the challenges for the approximation in weighted spaces is that the beautiful Fourier-based methods commonly used in the Strang-Fix theory~\cite{StrangF:1971,deBoorDR:1994,Unser:1996,UnserD:1997,BluU:1999a,BluU:1999b,BluU:1999c} are no longer applicable,  even in the weighted-$L_2$ case, due to the lack of a Parseval-type relation. 
In proving the bounds~\eqref{eq:weighted_proj} and~\eqref{eq:weighted_interp}, we adapt the $L_p$-approximation techniques in~\cite{JiaL:1993,Lei:1994}, which are carried entirely in the space domain, but our analysis is much more involved because of the handling of the weights. We also heavily rely on the preliminary results in~\cite{NguyenU:2015b}. Other works that are closely related to the present paper are~\cite{Tomita:2006,AimarBM:2003} in which similar bounds were derived in the weighted-$L_p$ spaces associated with the so-called \emph{Muckenhoupt weights}~\cite{Muckenhoupt:1972}. These weights, however, are strikingly different from the Sobolev weights used in this paper. They are characterized by the boundedness of the Hardy-Littlewood maximal operator~\cite{HardyL:1930,Stein:1955,FeffermanS:1971} with respect to the weighted norm. Typical examples of the Muckenhoupt weights are $\|\cdot\|^{\alpha}$, for $\alpha$ being restricted in the interval $\left(-\Frac{d}{p},d-\Frac{d}{p}\right)$ (cf.~\cite{Kurtz:1980}). By contrast, the Sobolev weights $(1+\|\cdot\|^2)^{\alpha/2}$ can take arbitrary order $\alpha\in\RR$ and therefore give us more freedom in quantifying the growth or decay of the signals. Moreover, the Muckenhoupt weights are not well-suited to time-frequency analysis because they are generally not \emph{submultiplicative}, an important property that is satisfied by the Sobolev weights (cf.~\cite[Section 9]{Grochenig:2006}). 

The remainder of the paper is organized as follows: preliminary notions are introduced in Section~\ref{sec:prelim}; approximation error bounds for the projection and interpolation paradigms are derived in Sections~\ref{sec:proj} and~\ref{sec:interp}, respectively; proofs of several auxiliary results are given in Section~\ref{sec:proofs}.

\section{Preliminaries}	\label{sec:prelim}
\subsection{Notation}
All functions in this paper are mappings from $\R$ to $\C$ for a fixed dimension $d\geq 1$. Vectors in $\R$ are denoted by bold letters and their Euclidean norms are denoted by $\|\cdot\|$. The constants throughout the paper are denoted by $C$ with subscripts indicating the dependence of the constants on some parameters; we use the same notation for different constants that depend on the same set of parameters.
The restriction of a function $f$ on the multi-integer grid $\Z$ is denoted by $f[\cdot]$. $\NN$ is the set of natural numbers starting from zero and  $\Zplus$ is the set of positive integers, i.e., $\Zplus = \NN\setminus \{0\}$. For brevity, we denote by $\langle\cdot\rangle$ the Sobolev weighting function $(1+\|\cdot\|^2)^{1/2}$. 
For $1\leq p\leq \infty$, we use $p'$ to denote the H\"{o}lder conjugate of $p$ that satisfies $\frac{1}{p}+\frac{1}{p'}=1$ . 

$\bump$ is the space of smooth and compactly supported functions, $\cS(\R)$ is Schwartz' class of smooth and rapidly decaying functions, and $\cS'(\R)$ is the space of tempered distributions, which are continuous linear functionals on $\cS(\R)$. As usual, the notation $\ip{\cdot}{\cdot}$ is used interchangeably for the scalar product and for the action of a distribution on a test function.
The (distributional) Fourier transform $\hat{f}=\cF f$ of  a tempered distribution $f\in \cS'(\R)$ is also a tempered distribution defined as
\begin{align*}
\ip{\cF f}{\varphi}\defeq\ip{\hat{f}}{\varphi} \defeq\ip{f}{\hat{\varphi}}, \quad \text{for }\varphi\in \cS(\R),
\end{align*}
where 
\begin{align*}
\hat{\varphi}(\bomega) \defeq \int_{\R}\varphi(\bx) {\rm e}^{- {\rm j} \ip{\bomega}{\bx}}{\rm d}\bx.
\end{align*}
We denote the inverse Fourier-transform operator by $\cF^{-1}$. For a multi-index $\bl\in\N$, $|\bl|\defeq\sum_{i=1}^d \ell_i$ and $\partial^{\bl}$ is a shorthand for $\left(\Frac{\partial}{\partial x_1}\right)^{\ell_1}\cdots\left(\Frac{\partial}{\partial x_d}\right)^{\ell_d}$. The (distributional) partial derivative with respect to $\bl$ of a tempered distribution $f\in\cS'(\R)$ is also a tempered distribution defined as
\begin{align*}
	\ip{\partial^{\bl}f}{\varphi} &\defeq (-1)^{|\bl|}\ip{f}{\partial^{\bl}\varphi},\quad \text{for } \varphi\in\cS(\R).
\end{align*}
We also use the notation
\begin{align*}	
f^{(n)} &\defeq \sum_{\bl\in\N:|\bl|=n} |\partial^{\bl}f|.
\end{align*}
$\nabla\defeq \left(\Frac{\partial}{\partial x_1},\ldots,\Frac{\partial}{\partial x_d}\right)$ is  the gradient operator and $D_{\bu}  \defeq \ip{\nabla }{\bu}$ is the directional derivative operator with respect to  $\bu\in\R$. The shift and difference operators are defined as
$S_{\bu}f \defeq f(\cdot - \bu)$ and $\Delta_{\bu}f \defeq f - S_{\bu}f$, respectively. 
For $h>0$, $\sigma_h$ denotes the scaling operator given by $\sigma_{h}f \defeq f(\cdot/h)$.  
\subsection{Weighted Normed Spaces}
The spaces $L_p(\R)$ and $\ell_p(\Z)$ and their corresponding norms $\norm{\cdot}_{L_p(\R)}$ and $\norm{\cdot}_{\ell_p(\Z)}$ are defined as usual. We also need the hybrid-norm space $W_{p}(\R)$ which comprises all functions $f$ whose hybrid (mixed) norm
\begin{align*}
\norm{f}_{W_{p}(\R)}\defeq
\begin{cases}
\left(\int_{[0,1]^d}\left(\sum_{\bk\in\Z}|f(\bx+\bk)|\right)^p\d\bx\right)^{\Frac{1}{p}} &  1\leq p<\infty\\
\ess\sup_{\bx\in[0,1]^d}\sum_{\bk\in\Z}|f(\bx+\bk)| &  p=\infty
\end{cases}
\end{align*}
is finite. For any weighting function $w$, the weighted spaces $L_{p,w}(\R)$, $
\ell_{p,w}(\Z)$ and $W_{p,w}(\R)$ are defined with respect to the following weighted norms:
\begin{align*}
\norm{f}_{L_{p,w}(\R)}&\defeq \norm{f\cdot w}_{L_p(\R)},\\
\norm{c}_{\ell_{p,w}(\Z)}&\defeq \norm{c\cdot w[\cdot]}_{\ell_p(\Z)},\\
\norm{f}_{W_{p,w}(\R)}&\defeq \norm{f\cdot w}_{W_p(\R)}.
\end{align*}
When $w=\w{\cdot}{\alpha}$, for some $\alpha\in\RR$, we write $L_{p,\alpha}(\R)$ for $L_{p,w}(\R)$, $\ell_{p,\alpha}(\Z)$ for $\ell_{p,w}(\Z)$, and $W_{p,\alpha}(\R)$ for $W_{p,w}(\R)$. Note that, for $\alpha\geq 0$, the weight $w=\w{\cdot}{\alpha}$ is (weakly) submultiplicative, i.e.,
\begin{align*} %\label{eq:submulti1}
\w{\bx+\by}{\alpha}\leq C_\alpha\, \w{\bx}{\alpha}\w{\by}{\alpha},\quad \forall \bx,\by\in\R,
\end{align*}
which is equivalent to
\begin{align*} %\label{eq:submulti2}
\w{\bx+\by}{-\alpha}\leq C_\alpha\, \w{\bx}{\alpha}\w{\by}{-\alpha},\quad \forall \bx,\by\in\R.
\end{align*}
Furthermore, the weight $w=\w{\cdot}{\alpha}$ satisfies the Gelfand-Raikov-Shilov condition~\cite{GelfandRS:1964} that
\begin{align*} %\label{eq:GRS}
\lim_{n\rightarrow\infty} w(n\bx)^{\Frac{1}{n}} = 1,\quad \forall\,\bx\in\R.
\end{align*}
These two properties of $\w{\cdot}{\alpha}$, with $\alpha\geq 0$, will be crucial for us to manipulate weights.

Finally, let us define the weighted Sobolev spaces of integer and fractional orders. Given $1\leq p\leq \infty$ and $\alpha\in\RR$, the space $H^k_{p,\alpha}(\R)$ with $k\in\NN$ consists of all $f\in\cS'(\R)$ such that 
\begin{align*}	
\norm{f}_{H^k_{p,\alpha}(\R)} &\defeq \sum_{\bl\in\N:|\bl|\leq k} \norm{\partial^{\bl}f}_{L_{p,\alpha}(\R)}<\infty.
\end{align*}
It is straightforward that if $f\in H_{p,\alpha}^k(\R)$ then $f^{(n)}\in L_{p,\alpha}(\R)$, for all $n\leq k$. Meanwhile, the space $L^s_{p,\alpha}$ with $s\in \RR$ consists of all $f\in\cS'(\R)$ such that
\begin{align*}	
\norm{f}_{L^s_{p,\alpha}(\R)} &\defeq \norm{\cF^{-1}\left\{\w{\cdot}{s}\hat{f}\right\}}_{L_{p,\alpha}(\R)}<\infty.
\end{align*}
From here on, the term $\cF^{-1}\left\{\w{\cdot}{s}\hat{f}\right\}$  will be abbreviated as $D^sf$. When $s>0$, $D^{-s}$ is the Bessel potential of order $s$~\cite{Grafakos:2008b}.
We also need the hybrid weighted Sobolev space $H^{k,s}_{p,\alpha}$ which encompasses all $f\in\cS'(\R)$ such that $D^sf\in H^{k}_{p,\alpha}$. Note that, in the unweighted case ($\alpha=0$), it is not difficult to show that $H^{k,s}_{p}(\R)=L^{k+s}_p(\R)$, for $1<p<\infty$, using the Mikhlin-H\"{o}rmander theorem on Fourier multipliers (cf.~\cite[Chapter 5]{Grafakos:2008a} and~\cite[Chapter 6]{Grafakos:2008b}). For $\alpha\neq 0$, however, $H^{k,s}_{p,\alpha}(\R)$ is not necessarily the same as $L^{k+s}_{p,\alpha}(\R)$. This is due to the lack of a theory on weighted Fourier multipliers for the Sobolev weights; most of the existing literature are concerned with the  Muckenhoupt weights, instead~\cite{Kurtz:1980,EdmundsKM:2002,Krol:2014}.  
\subsection{Shift-Invariant Spaces of Non-Decaying Functions}
We are interested in the approximation of a non-decaying function living in the ambient space $L_{p,-\alpha}(\R)$, for some $\alpha \geq 0$, by an element in the (weighted) shift-invariant space $V_{p,-\alpha,\h}(\varphi)$ generated by some kernel $\varphi$ defined as
\begin{align*}
V_{p,-\alpha,h}(\varphi) &\defeq \left\{f=\sum_{\bk\in\Z}c[\bk]\varphi\left(\frac{\cdot}{h}-\bk\right): c\in\ell_{p,-\alpha}(\Z)\right\},
\end{align*}
where $h>0$ is a varying scale (sampling step). We write $V_{p,-\alpha}(\varphi)$ for $V_{p,-\alpha,1}(\varphi)$, write $V_{p,h}(\varphi)$ for $V_{p,0,h}(\varphi)$, and write $V_{p}(\varphi)$ for $V_{p,0,1}(\varphi)$. In addition to including many types of signal reconstruction models covered in the literature~\cite{Unser:2000}, this general formulation allows us to deal with (polynomially) growing signals. Similar to the unweighted case, we want to make sure that the (unscaled) space $V_{p,-\alpha}(\varphi)$ is a closed subspace of $L_{p,-\alpha}(\R)$ and each of its member $f\in V_{p,-\alpha}(\varphi)$ has an unambiguous representation in terms of the coefficients $c[\bk]$. It turns out that, as shown in~\cite[Theorem 2]{NguyenU:2015b}, this wish list will be fulfilled if the generating kernel $\varphi$ satisfies the following admissibility conditions:
\begin{itemize}
	\item[(i)] $\{\varphi(\cdot-\bk)\}_{\bk\in\Z}$ is a Riesz basis for $V_2(\varphi)$ or, equivalently, the Fourier tranform of the autocorrelation sequence, $\hat{a}_{\varphi}(\bomega)\defeq\sum_{\bk\in\Z}\left|\hat{\varphi}(\bomega+2\pi\bk)\right|^2$, is bounded from below and above for almost all $\bomega\in\R$;
	\item[(ii)] $\varphi$ belongs to the weighted hybrid-norm space $W_{q,\alpha}(\R)$ with $q\defeq\max(p,p')$. 
\end{itemize}  
We want to emphasize that the above conditions, though mathematically cumbersome, are by no means restrictive since they are easily satisfied by all interpolation kernels used in practice, and in particular B-splines~\cite{Unser:2000}.

\subsection{Strang-Fix Conditions}\label{sec:Strang-Fix}
There are multiple forms of the Strang-Fix conditions; the equivalence between them was initially shown for compactly supported functions~\cite{StrangF:1971} but then extended to kernels with global supports~\cite{LightC:1992,JiaL:1993}. The most common form of the Strang-Fix conditions is characterized in the frequency domain: a kernel $\varphi$ is said to satisfy the Strang-Fix conditions of order $L$ if  its Fourier transform $\hat{\varphi}$ satisfies
\begin{itemize}
	\item[(i)] $\hat{\varphi}(\zv)\neq 0$ and
	\item[(ii)] $\partial^{\bl}\hat{\varphi}(2\pi\bk)= 0, \quad \forall |\bl|\leq L-1,\forall \bk\in\Z\setminus\{\zv\}$.
\end{itemize}
These conditions are equivalent to the existence of a \emph{quasi-interpolant} $\varphi_{\rm QI}$ of order $L$~\cite{deBoorF:1973,deBoor:1987,ChuiD:1990} in the shift-invariant subspace $V_2(\varphi)$. This quasi-interpolant exactly interpolates all polynomials of degree (strictly) less than $L$, i.e.
\begin{align}\label{eq:quasi}
	\sum_{\bk\in\Z}\bk^{\bl}\varphi_{\rm QI}(\bx-\bk) = \bx^{\bl},\quad\forall |\bl|\leq L-1, \forall \bx\in\R,
\end{align}    
where $\bx^\bl$ stands for $x_1^{\ell_1}\cdots x_d^{\ell_d}$. Therefore, the Strang-Fix conditions of order $L$ can also be described as the ability of the space $V_2(\varphi)$ to reproduce polynomials of degree less than $L$. It is important to note that, for a particular $\varphi$, there are multiple choices for the quasi-interpolant within the subspace $V_2(\varphi)$, one of which is the interpolant $\varphi_{\rm int}$ that satisfies not only~\eqref{eq:quasi} but also the interpolating property
\begin{align}\label{eq:interpolating}
	\varphi_{\rm int}(\bk) = \delta[\bk],\quad\forall \bk\in\Z,
\end{align}
where $\delta[\cdot]$ denote the discrete unit impulse; the construction of this interpolant will be discussed in Section~\ref{sec:interp}.

Most importantly, the Strang-Fix conditions of order $L$ are necessary and sufficient for the controlled $L_2$-approximation of order $L$ that for any $f\in H_2^L(\R)$, there exists $\tilde{f}=\sum_{\bk\in\Z}c[\bk]\varphi\left(\Frac{\cdot}{h}-\bk\right)$ in $V_2(\varphi)$ such that 
\begin{itemize}
	\item[(i)] $\norm{c}_{\ell_2(\Z)} \leq C \cdot\|f\|_{L_2(\R)}$ and
	\item[(ii)] $\norm{f-\tilde{f}}_{L_2(\R)} \leq C \cdot h^L\cdot \norm{f^{(L)}}_{L_2(\R)}$, as $h\rightarrow 0$,
\end{itemize}
where the constants $C$ are independent of $f$. Note that the controllability of the approximation is dictated by the first bound, whereas the order of the approximation is described by the second bound. This beautiful connection between the approximation of order $L$ and the ability of the representation space to reproduce polynomials of degree less than $L$ lies at the core of the Strang-Fix theory and its various extensions~\cite{JiaL:1993,deBoorDR:1994,BluU:1999a}. Finally, it is handy to keep in mind that the B-spline of order $L$~\cite{UnserAE:1993a,UnserAE:1993b} satisfies the Strang-Fix conditions of order $L$.
\section{Projection Error Bound}\label{sec:proj}
In this section, we derive the error bound for the approximation of a non-decaying function in the weighted Sobolev space $ H^L_{p,-\alpha}(\R)$ by its projection onto the shift-invariant space $V_{p,-\alpha,h}(\varphi)$.  
Assume throughout this section that the kernel $\varphi$ is such that $\left\{\varphi(\cdot-\bk)\right\}_{\bk\in\Z}$ is a Riesz basis for $V_2(\varphi)$. This condition guarantees~\cite{Unser:2000} that the dual kernel $\dual$ exists and is given in the Fourier domain by
\begin{align*}
	\widehat{\dual}(\bomega) = \frac{\hat{\varphi}(\bomega)}{\sum_{\bk\in\Z}|\hat{\varphi}(\bomega+2\pi\bk)|^2}. 
\end{align*}
Let us define the operator
\begin{align*}
P_{\varphi,h}: f\mapsto  \tilde{f}_{\rm proj}=\sum_{\bk\in\Z}c[\bk]\varphi\left(\frac{\cdot}{h}-\bk\right),
\end{align*}
where, for each $\bk\in\Z$, the coefficient $c[\bk]$ is given by 
\begin{align*}
	c[\bk] = \frac{1}{h^d}\int_{\R} f(\by)\dual\left(\frac{\by}{h}-\bk\right)\d \by.
\end{align*}
In the language of signal processing, $c[\bk]$ is the result of prefiltering the signal $f$ with the filter $h^{-d}\dual\left(-\frac{\cdot}{h}\right)$ followed by a sampling at location $h\bk$. 
%Let $K_{\varphi}(\bx,\by)$ be the reproducing kernel associated with $\varphi$ defined by
%\begin{align}
%K_{\varphi}(\bx,\by) \defeq \sum_{\bk\in\Z} \varphi(\bx - \bk)\dual (\by - \bk).
%\end{align}
%For $h>0$, let us define the operator
%\begin{align}
%P_{\varphi,h}: f\mapsto  \int_{\R}f(\by)K_{\varphi}\left(\frac{\cdot}{h},\frac{\by}{h}\right)\d\frac{\by}{h}.
%\end{align}
We write $P_{\varphi}$ for $P_{\varphi,1}$. It is well known in the (unweighted) $L_2$ case that $P_{\varphi,h}$ is an orthogonal projector from $L_2(\R)$ onto the subspace $V_{2,h}(\varphi)$ and therefore provides the best $L_2$-approximation. In the weighted-$L_p$ setup, orthogonality no longer exists but the operator $P_{\varphi,h}$ still behaves properly. In particular, the following result shows that $P_{\varphi,h}$ is a bounded projector from $L_{p,-\alpha}(\R)$ onto $V_{p,-\alpha,h}(\varphi)$ whose norm is bounded as the scale $h$ tends to $0$. The essential condition for that to hold true is that the generating kernel $\varphi$ is a member of an appropriate weighted hybrid-norm space.

%or $P_{\varphi,h}$ is a projection for all $h>0$. In order to show the boundedness $P_{\varphi,h}$,
%we note that, for any $g\in L_{p,-\alpha}(\R)$,
%\begin{align}
%	\norm{\sigma_{h}g}_{L_{p,-\alpha}(\R)} &= \left(\int_{\R}\left|\w{\bx}{-\alpha}g\left(\frac{\bx}{h}\right)\right|^{p}\d\bx\right)^{\Frac{1}{p}}\nonumber\\
%	& = \left(\int_{\R}\left|\w{h\bx}{-\alpha}g\left(\bx\right)\right|^{p}h^{d}\d\bx\right)^{\Frac{1}{p}}\nonumber\\
%	&\leq \max(1,h^{-\alpha})h^{d/p} \norm{g}_{L_{p,-\alpha}(\R)}.
%\end{align}
%Using this bound and the boundedness of the operator $P_{\varphi}$, we have that
\begin{thm}\label{thm:projection}
	Let $1\leq p\leq \infty$ and $\alpha\geq 0$. If $\varphi\in W_{q,\alpha}(\R)$ with $q\defeq\max(p,p')$ and $\left\{\varphi(\cdot-\bk)\right\}_{\bk\in\Z}$ is a Riesz basis for $V_2(\varphi)$, then, for all $h>0$, $V_{p,-\alpha,h}(\varphi)$ is a closed subspace of $L_{p,-\alpha}(\R)$ and $\Ph$ is a projector from $L_{p,-\alpha}(\R)$ onto $V_{p,-\alpha,h}(\varphi)$. Furthermore, there exists a constant $C_{\varphi,\alpha}$ such that 
	\begin{align}\label{eq:projector_bound_h}
		\wn{\Ph f} \leq C_{\varphi,\alpha}\wn{f}, \quad\forall f\in L_{p,-\alpha}(\R), \forall h\in (0,1).
	\end{align}
\end{thm}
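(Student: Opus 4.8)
\emph{Proof strategy.} The plan is to funnel all three assertions into the single estimate \eqref{eq:projector_bound_h}, whose only genuine difficulty is the uniformity of the constant as $\h\to0$. First I would dispose of the algebraic part: for $f=\sum_{\bl\in\Z}c[\bl]\varphi\!\left(\frac{\cdot}{\h}-\bl\right)\in V_{p,-\alpha,\h}(\varphi)$, substituting $\bu=\frac{\by}{\h}-\bl$ in the integral defining the coefficients of $\Ph f$ and using the biorthogonality $\int_{\R}\varphi(\bu-\bl)\dual(\bu-\bk)\,\d\bu=\delta[\bl-\bk]$, which is immediate from the Fourier-domain formula for $\widehat{\dual}$, shows that the new coefficients are again $c[\bk]$, i.e. $\Ph$ reproduces $f$; the interchange of $\sum_{\bl}$ with $\int$ is justified, uniformly in $\bk$, by H\"older's inequality together with $\varphi,\dual$ lying in weighted hybrid-norm spaces and $c\in\ell_{p,-\alpha}(\Z)$. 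Hence $\Ph\circ\Ph=\Ph$. Granting \eqref{eq:projector_bound_h} and the fact --- read off from the amalgam estimates below --- that $\Ph f$ always has $\ell_{p,-\alpha}(\Z)$ coefficients, the range of the bounded idempotent $\Ph$ is exactly $V_{p,-\alpha,\h}(\varphi)$, which is therefore closed (alternatively, closedness for $\h=1$ is \cite[Theorem 2]{NguyenU:2015b} and transfers to every $\h$ because $\sigma_\h$ is a linear homeomorphism of $L_{p,-\alpha}(\R)$ onto itself). So it remains to prove \eqref{eq:projector_bound_h}.

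\emph{Scaling reduction.} The operator factors as $\Ph=\sigma_\h\circ\P\circ\sigma_\h^{-1}$. Putting $g\defeq\sigma_\h^{-1}f$ and changing variables $\bx=\h\bt$ in the integral defining $\wn{\cdot}$ yields the exact identities
\begin{align*}
\wn{\Ph f}=\h^{d/p}\,\norm{\P g}_{L_{p,v_\h}(\R)},\qquad\wn{f}=\h^{d/p}\,\norm{g}_{L_{p,v_\h}(\R)},\qquad v_\h\defeq\w{\h\,\cdot}{-\alpha}
\end{align*}
(for $p=\infty$ the same identities hold with essential suprema in place of integrals). Thus \eqref{eq:projector_bound_h} is \emph{equivalent} to $\sup_{\h\in(0,1)}\norm{\P}_{L_{p,v_\h}(\R)\to L_{p,v_\h}(\R)}<\infty$; the $\h$-power cancels in the ratio. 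The structural remark making this work is that, although the weights $v_\h$ flatten out as $\h\to0$, they are all \emph{moderate with respect to one fixed submultiplicative weight} $\w{\cdot}{\alpha}$, \emph{with an $\h$-independent constant}: the inequality $\w{\bx+\by}{-\alpha}\le C_\alpha\w{\bx}{\alpha}\w{\by}{-\alpha}$ together with $\w{\h\bx}{\alpha}\le\w{\bx}{\alpha}$ (valid for $\h\le1$, $\alpha\ge0$) gives
\begin{align*}
v_\h(\bx+\by)\le C_\alpha\,\w{\bx}{\alpha}\,v_\h(\by),\qquad\forall\,\bx,\by\in\R,\ \forall\,\h\in(0,1).
\end{align*}

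\emph{Uniform boundedness of $\P$ on moderate-weight spaces.} The core is the claim that for \emph{any} weight $v$ with $v(\bx+\by)\le C_\alpha\w{\bx}{\alpha}v(\by)$ (for all $\bx,\by$), $\P$ is bounded on $L_{p,v}(\R)$ with a norm depending only on $C_\alpha$, $p$, $d$ and the fixed quantities $\norm{\varphi}_{W_{q,\alpha}(\R)}$, $\norm{\dual}_{W_{q,\alpha}(\R)}$ --- in particular \emph{not} on $v$ itself. I would obtain this by re-running the scale-$1$ analysis/synthesis estimates of \cite{NguyenU:2015b} with the general weight $v$: writing $\P=T_\varphi\circ A_\dual$, $A_\dual g=(\ip{g}{\dual(\cdot-\bk)})_{\bk\in\Z}$ and $T_\varphi c=\sum_{\bk\in\Z}c[\bk]\varphi(\cdot-\bk)$, one transports $v(\bk)\le C_\alpha\w{\bx-\bk}{\alpha}v(\bx)$ into the integral, so that $v(\bk)\,|\ip{g}{\dual(\cdot-\bk)}|\le C_\alpha\int_{\R}(|g|v)(\bx)\,(\w{\bx-\bk}{\alpha}|\dual(\bx-\bk)|)\,\d\bx$; the right-hand side is the sampling at $\bk$ of the convolution of $|g|v\in L_p(\R)$ with the kernel $\w{\cdot}{\alpha}|\dual|\in W_q(\R)$ (which is exactly the content of $\dual\in W_{q,\alpha}(\R)$), and the standard cube-decomposition with $\ell_p$--$\ell_1$ Young bookkeeping gives $\norm{A_\dual g}_{\ell_{p,v}(\Z)}\le C(\alpha,p,d)\,\norm{\dual}_{W_{q,\alpha}(\R)}\,\norm{g}_{L_{p,v}(\R)}$. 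The mirror-image argument --- transporting $v(\bx)\le C_\alpha\w{\bx-\bk}{\alpha}v(\bk)$ onto the coefficients, localizing to the cubes $\bn+[0,1)^d$, and using $\w{\cdot}{\alpha}|\varphi|\in W_q(\R)$ --- yields $\norm{T_\varphi c}_{L_{p,v}(\R)}\le C(\alpha,p,d)\,\norm{\varphi}_{W_{q,\alpha}(\R)}\,\norm{c}_{\ell_{p,v}(\Z)}$. Composing and applying with $v=v_\h$ finishes the proof. (That $\norm{\dual}_{W_{q,\alpha}(\R)}<\infty$ --- which is where the Riesz-basis hypothesis enters, through a weighted Wiener-type lemma for the periodic multiplier $1/\sum_{\bk}|\hat{\varphi}(\cdot+2\pi\bk)|^2$ --- belongs to the preliminary apparatus of \cite{NguyenU:2015b}.)

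\emph{Main obstacle.} The trap, and the reason a direct conjugation argument fails, is that on $L_{p,-\alpha}(\R)$ one only has $\norm{\sigma_\h}\lesssim\h^{d/p-\alpha}$ and $\norm{\sigma_\h^{-1}}\lesssim\h^{-d/p}$, whose product $\h^{-\alpha}$ blows up as $\h\to0$; feeding these into $\Ph=\sigma_\h\P\sigma_\h^{-1}$ gives only $\wn{\Ph f}\le C\h^{-\alpha}\wn{f}$. The whole content is that this blow-up is spurious: once the analysis/synthesis estimates are carried out with a free moderate weight, only the \emph{fixed} control weight $\w{\cdot}{\alpha}$ --- the very weight for which $\varphi,\dual\in W_{q,\alpha}(\R)$ --- ever enters the constants, the $v_\h$ drop out, and $\h$-uniformity becomes automatic. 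The real work is therefore the weighted amalgam bookkeeping behind the two displayed operator estimates, namely making their constants independent of the weight; the rest is routine.
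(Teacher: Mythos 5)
Your proposal is correct and follows essentially the same route as the paper: factor $\Ph=\sigma_h P_\varphi\sigma_{1/h}$, change variables to reduce \eqref{eq:projector_bound_h} to boundedness of $P_\varphi$ on $L_{p,1/w_h}(\R)$ with $w_h=\w{h\cdot}{\alpha}$, and exploit that $w_h$ is submultiplicative/moderate with an $h$-independent constant and satisfies $w_h\leq\w{\cdot}{\alpha}$ for $h\in(0,1)$, so the constants reduce to the fixed weighted hybrid norms of $\varphi$ and $\dual$ (the latter finite by the weighted Wiener-type result of~\cite{NguyenU:2015b}). Your re-derivation of the analysis/synthesis estimates with a general moderate weight is just a self-contained version of the paper's citation of the scale-$1$ bound from~\cite[Theorem 1]{NguyenU:2015b}, and your closedness/idempotence arguments match the paper's scaling argument up to minor variations.
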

\begin{proof}
Since $\varphi\in W_{q,\alpha}(\R)$ and $\left\{\varphi(\cdot-\bk)\right\}_{\bk\in\Z}$ is a Riesz basis for $V_2(\varphi)$, it is known from~\cite[Theorems 1 \& 2]{NguyenU:2015b} that  $V_{p,-\alpha}(\varphi)$ is a closed subspace of $L_{p,-\alpha}(\R)$ and $\P$ is a bounded projector from $L_{p,-\alpha}(\R)$ onto $V_{p,-\alpha}(\varphi)$. We now divide the rest of the proof into several steps.

First, we show that  $V_{p,-\alpha,h}(\varphi)$ is a subspace of $L_{p,-\alpha}(\R)$, for all $h>0$. Given $f\in V_{p,-\alpha,h}(\varphi)$, it is clear that $\sigma_{1/h}f\in V_{p,-\alpha}(\varphi)\subset L_{p,-\alpha}(\R)$. On the other hand,  
\begin{align}
	\wn{f}^p  &= h^{d}\int_{\R} \w{\h \bx}{-\alpha p}\left|(\sigma_{1/h}f)(\bx)\right|^p\d\bx\nonumber\\
	&\leq h^d\cdot\max (1,h^{-\alpha p}) \int_{\R} \w{\bx}{-\alpha p}\left|(\sigma_{1/h}f)(\bx)\right|^p\d\bx\nonumber\\
	&= h^d\cdot\max (1,h^{-\alpha p}) \cdot	\wn{\sigma_{1/h}f}^p.\label{eq:norm_h}
\end{align}
This implies that $f$ also belongs to $L_{p,-\alpha}(\R)$, or $V_{p,-\alpha,h}(\varphi)$ is a subspace of $L_{p,-\alpha}(\R)$, for all $h>0$.

Second, we show that $V_{p,-\alpha,h}(\varphi)$ is closed under the norm of $L_{p,-\alpha}(\R)$, for all $h>0$. 
Let $\{f_n\}$ be a sequence in $V_{p,-\alpha,h}(\varphi)$ such that $f_n\rightarrow f$ in $L_{p,-\alpha}(\R)$ as $n\rightarrow \infty$. Similar to~\eqref{eq:norm_h}, we have that
\begin{align*}
	\wn{\sigma_{1/h}f_n - \sigma_{1/h}f} \leq h^{-d/p}\cdot \max(1,h^{\alpha}) \cdot\wn{f_n-f},
\end{align*}
which implies that $\sigma_{1/h}f_n\rightarrow \sigma_{1/h}f$ in $L_{p,-\alpha}(\R)$ as $n\rightarrow \infty$. As $\left\{\sigma_{1/h}f_n\right\}$ is a sequence in $V_{p,-\alpha}(\varphi)$, it follows from the closedness of $V_{p,-\alpha}(\varphi)$ that $\sigma_{1/h} f \in V_{p,-\alpha}(\varphi)$, or $f\in V_{p,-\alpha,h}(\varphi)$. This shows the closedness of $V_{p,-\alpha,h}(\varphi)$.

Third, we show that $\Ph$ is a projector that maps $L_{p,-\alpha}(\R)$ to $V_{p,-\alpha,h}(\varphi)$, for all $h>0$. Observe that $P_{\varphi,h} = \sigma_{h}P_{\varphi}\sigma_{1/h}$. From~\eqref{eq:norm_h}, $\sigma_{1/h}$ maps $L_{p,-\alpha}(\R)$ to itself. It is also known that $P_{\varphi}$ maps $L_{p,-\alpha}(\R)$ to  $V_{p,-\alpha}(\varphi)$ and $\sigma_{h}$ maps $V_{p,-\alpha}(\varphi)$ to $V_{p,-\alpha,h}(\varphi)$. Therefore, $\Ph$ maps $L_{p,-\alpha}(\R)$ to $V_{p,-\alpha,h}(\varphi)$. The idempotence of $\Ph$ can be easily verified as 
$$
P_{\varphi,h}^2 = \sigma_{h}P_{\varphi}\sigma_{1/h}\sigma_{h}P_{\varphi}\sigma_{1/h} = \sigma_{h}P^2_{\varphi}\sigma_{1/h} = \sigma_{h}P_{\varphi}\sigma_{1/h} = P_{\varphi,h},
$$
where we have relied on the idempotence of the projector $P_{\varphi}$.

Finally, we show the bound~\eqref{eq:projector_bound_h}. Let us consider the weighting function $w_{h}(\bx)\defeq \w{\h\bx}{\alpha}$. It is easy to see that $w_h$ satisfies 
\begin{align}\label{eq:submulti}
	w_h(\bx+\by)\leq C_{\alpha}\, w_h(\bx)w_h(\by),\quad \forall \bx,\by\in\R,\forall h>0.
\end{align}
By a change of variable and from the last bound in the proof of~\cite[Theorem 1]{NguyenU:2015b}, we have that, for all $h>0$,
\begin{align}
	\norm{P_{\varphi,h}f}_{L_{p,-\alpha}(\R)} 
	&= \norm{\sigma_{h}P_{\varphi}\sigma_{1/h}f}_{L_{p,-\alpha}(\R)}\nonumber\\
	&= h^{d/p}\cdot\norm{P_{\varphi}(\sigma_{1/h}f)}_{L_{p,\Frac{1}{w_h}}(\R)}\nonumber\\
	&\leq h^{d/p}\cdot C_{\alpha}^2\cdot\norm{\varphi}_{W_{p,w_h}(\R)}\norm{\dual}_{W_{p',w_h}(\R)}\norm{\sigma_{1/h}f}_{L_{p,\Frac{1}{w_h}}(\R)}\nonumber\\
	&= C_{\alpha}^2\cdot\norm{\varphi}_{W_{p,w_h}(\R)}\norm{\dual}_{W_{p',w_h}(\R)}\norm{f}_{L_{p,-\alpha}(\R)},\label{eq:projector_bound1}
%	&\leq C_{\varphi,\alpha}\cdot\norm{f}_{L_{p,-\alpha}(\R)}.\label{eq:projector_bound}
\end{align}
where $C_{\alpha}$ is precisely the constant in~\eqref{eq:submulti} that does not depend on $h$. On the other hand, according to~\cite[Proposition 6]{NguyenU:2015b}, both $\varphi$ and $\dual$ are elements of $W_{q,\alpha}(\R)$. Since $q=\max(p,p')$, it must be that $\varphi\in W_{p,\alpha}(\R)$ and $\dual\in W_{p',\alpha}(\R)$. Moreover, the assumption that $h\in (0,1)$ gives
\begin{align}
	\norm{\varphi}_{W_{p,w_h}(\R)} \leq \norm{\varphi}_{W_{p,\alpha}(\R)} < \infty,\label{eq:projector_bound2}
\end{align}
and
\begin{align}
	\norm{\dual}_{W_{p',w_h}(\R)} \leq \norm{\dual}_{W_{p',\alpha}(\R)} < \infty.\label{eq:projector_bound3}
\end{align}
Putting together~\eqref{eq:projector_bound1}, \eqref{eq:projector_bound2}, and~\eqref{eq:projector_bound3} yields the desired bound~\eqref{eq:projector_bound_h}.
\end{proof}

The main result of this section is as follows:
\begin{thm}\label{thm:main}
	Let $1\leq p\leq \infty$, $L\in\Zplus$, and $\alpha\geq 0$. Assume that $\varphi\in W_{q,L+\alpha}(\R)$ with $q\defeq\max(p,p')$ and that $\left\{\varphi(\cdot-\bk)\right\}_{\bk\in\Z}$ is a Riesz basis for $V_2(\varphi)$. Assume also that $\varphi$ satisfies the Strang-Fix conditions of order $L$. Then, there exist a constant $C_{\varphi,L,\alpha}$ such that, for all $f\in H^{L}_{p,-\alpha}(\R)$,
	\begin{align}\label{eq:bound_error} 
	\wn{f-P_{\varphi,h}f}  \leq  C_{\varphi,L,\alpha}\cdot\h^L\cdot \wn{f^{(L)}},
	\end{align}
	when $h\rightarrow 0$.
\end{thm}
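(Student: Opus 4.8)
\emph{Proof idea.} The plan is to combine the uniform stability of the projector from Theorem~\ref{thm:projection} with a Jackson-type estimate obtained from a quasi-interpolant, carrying the weights through the space-domain arguments of~\cite{JiaL:1993,Lei:1994}.

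First, since $L\geq 1$ the weight $\w{\cdot}{L+\alpha}$ dominates $\w{\cdot}{\alpha}$, so $W_{q,L+\alpha}(\R)\subseteq W_{q,\alpha}(\R)$ and Theorem~\ref{thm:projection} applies: $\Ph$ is a projector onto $V_{p,-\alpha,h}(\varphi)$ with $\wn{\Ph g}\leq C_{\varphi,\alpha}\wn{g}$ for every $g\in L_{p,-\alpha}(\R)$ and every $h\in(0,1)$. Since $\Ph$ fixes every element of $V_{p,-\alpha,h}(\varphi)$, for any $g\in V_{p,-\alpha,h}(\varphi)$ we may write $f-\Ph f=(I-\Ph)(f-g)$, whence
\begin{align*}
\wn{f-\Ph f}\;\leq\;(1+C_{\varphi,\alpha})\,\wn{f-g}.
\end{align*}
It therefore suffices to produce, for each $h\in(0,1)$, some $g_h\in V_{p,-\alpha,h}(\varphi)$ with $\wn{f-g_h}\leq C_{\varphi,L,\alpha}\,\h^L\,\wn{f^{(L)}}$.

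Second, I would take $g_h$ to be a scaled quasi-interpolant. Because $\varphi$ obeys the Strang--Fix conditions of order $L$ we have $\hat{\varphi}(\zv)\neq 0$, and one can pick a function $\theta\in\bump$ whose finitely many low-order moments are prescribed so that $\hat{\theta}(-\cdot)\hat{\varphi}=1+O(\|\cdot\|^L)$ near the origin; together with $\partial^{\bl}\hat{\varphi}(2\pi\bk)=0$ for $\bk\in\Z\setminus\{\zv\}$, $|\bl|\leq L-1$, this forces the operator $Qf\defeq\sum_{\bk\in\Z}\ip{f}{\theta(\cdot-\bk)}\varphi(\cdot-\bk)$ to reproduce every polynomial of degree $<L$. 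Set $Q_hf\defeq\sigma_h Q\sigma_{1/h}f$ and take $g_h=Q_hf$. One checks $g_h\in V_{p,-\alpha,h}(\varphi)$: the analysis map $g\mapsto(\ip{g}{\theta(\cdot-\bk)})_{\bk}$ is bounded from $L_{p,-\alpha}(\R)$ into $\ell_{p,-\alpha}(\Z)$ (prefilter-then-sample is stable, in the spirit of~\cite{NguyenU:2015b}, using that $\theta$ is compactly supported and smooth), $\sigma_{1/h}$ preserves $L_{p,-\alpha}(\R)$ by~\eqref{eq:norm_h}, and synthesis against $\varphi\in W_{q,\alpha}(\R)$ returns an element of $V_{p,-\alpha}(\varphi)$ by~\cite[Theorem 1]{NguyenU:2015b}.

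Third --- and this is the technical heart --- I would estimate $\wn{f-Q_hf}$ by a cell-wise local Taylor argument. Over each cube $\h(\bn+[0,1]^d)$ replace $f$ by its degree-$(L-1)$ Taylor polynomial $p_{\bn}$ about $\h\bn$; polynomial reproduction gives $Q_h p_{\bn}=p_{\bn}$, so $f-Q_hf=(f-p_{\bn})-Q_h(f-p_{\bn})$, and the integral form of the Taylor remainder expresses both $(f-p_{\bn})(\bx)$ on the cube and the inner products $\ip{\sigma_{1/h}(f-p_{\bn})}{\theta(\cdot-\bk)}$ through $\h^L$ times local integrals of $f^{(L)}$; one then sums over $\bn$ using Minkowski's inequality in the hybrid norm. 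The main obstacle is the weight bookkeeping in this last step: because $\varphi$ has global support, the $\bk$-sum defining $Q_hf(\bx)$ for $\bx$ in the $\bn$-th cube collects Taylor remainders from far-away cubes that grow like $\|\bk-\bn\|^L$ in the cell distance, and these must be absorbed simultaneously against the decay of $\varphi$ and against the transfer of the weight $\w{\cdot}{-\alpha}$ from $\bx$ to the remote cube, effected through the (weak) submultiplicativity $\w{\bx+\by}{-\alpha}\leq C_\alpha\w{\bx}{\alpha}\w{\by}{-\alpha}$. Combining the polynomial factor $\|\bk-\bn\|^L$ with the weight factor $\w{\bk-\bn}{\alpha}$ produces exactly a $\w{\cdot}{L+\alpha}$-weighted lattice sum of $\varphi$, which is why the hypothesis is $\varphi\in W_{q,L+\alpha}(\R)$ rather than merely $W_{q,\alpha}(\R)$; its finiteness closes the estimate.
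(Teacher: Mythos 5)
Your first two steps are fine and match the paper's skeleton: Theorem~\ref{thm:projection} gives the uniform bound on $\Ph$ for $h\in(0,1)$, the Lebesgue-type reduction $\wn{f-\Ph f}\leq(1+C_{\varphi,\alpha})\wn{f-g}$ is legitimate because $\Ph$ fixes $V_{p,-\alpha,h}(\varphi)$, and your weight bookkeeping correctly identifies why the hypothesis must be $\varphi\in W_{q,L+\alpha}(\R)$ (the factor $\|\bk-\bn\|^L$ from the Taylor remainder times $\w{\bk-\bn}{\alpha}$ from submultiplicativity). The genuine gap is in your third step: you Taylor-expand $f$ itself, forming the degree-$(L-1)$ Taylor polynomial $p_{\bn}$ of $f$ about the lattice point $h\bn$ and invoking the integral form of the remainder. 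A generic $f\in H^{L}_{p,-\alpha}(\R)$ only has distributional derivatives lying in $L_{p,-\alpha}(\R)$; the derivatives of order $\leq L-1$ need not have well-defined values at the points $h\bn$ (for small $L-d/p$ they are not even continuous functions), so $p_{\bn}$ is not defined and the pointwise remainder identity you lean on is unavailable. In the unweighted theory of Jia--Lei and Lei this is repaired by a density argument (prove the bound for smooth $f$, then pass to the limit in $H^L_p$), but that escape route is exactly what is missing here: density of smooth functions fails outright for $p=\infty$ and is not established in the weighted setting, which the paper explicitly flags as the reason its argument must avoid it.

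The paper's way around this is structural, not cosmetic: it inserts the smoothing operator $\Jh$ and splits the error as $\wn{f-\Jh f}$ plus $\wn{\Jh f-\Ph\Jh f}$. The first piece is bounded by $C\,h^L\,\wn{f^{(L)}}$ via the B-spline representation of the finite difference $\Delta^{L}_{h\bu}f$ (Lemma~\ref{thm:Bspline}), an identity valid for tempered distributions that requires no pointwise Taylor expansion; the Taylor argument with integral remainder is then applied only to the $C^\infty$ function $\Jh f$ (Proposition~\ref{thm:error_Jf}). Your quasi-interpolant with a compactly supported analysis window $\theta$ is otherwise a reasonable substitute for working with the dual kernel $\dual$ (it would even simplify the appeal to \cite[Proposition 6]{NguyenU:2015b}), but to make your Jackson estimate rigorous you must either precompose with a smoothing step as the paper does, or replace $p_{\bn}$ by an averaged (Bramble--Hilbert-type) Taylor polynomial and redo the weighted estimates for it; as written, the cell-wise expansion of $f$ does not go through.
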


In what follows, we break the proof of Theorem~\ref{thm:main} into several small results.
Let us begin by defining the smoothing operator $\Jh$ as
\begin{align}\label{eq:smoothing}
\Jh:  f \mapsto \int_{\R}\left(f - \Delta_{\h\bu}^{\L}f\right)(\cdot) \chi\left(\bu\right)\d \bu,
\end{align}
with some underlying function $\chi\in\bump$ such that $\supp (\chi)\subset [-1,1]^d$ and $\int_{\R}\chi(\bu)\d\bu=1$.
This smoothing operator was also exploited in~\cite{JiaL:1993,Lei:1994}.

Expanding $\Delta_{\h\bu}^{\L}f$ as
\begin{align*}
\Delta_{\h\bu}^{\L} f = \sum_{n=0}^{\L} (-1)^{n}\binom{\L}{n}f(\cdot - n\h\bu),
\end{align*}
we obtain
\begin{align*}
f - \Delta_{\h\bu}^{\L} f = \sum_{n=1}^{\L} (-1)^{n-1}\binom{\L}{n}f(\cdot - n\h\bu).
\end{align*}
Therefore, $\Jh$ can also be expressed as
\begin{align*}
\Jh f &=  \sum_{n=1}^{\L} (-1)^{n-1}\binom{\L}{n} \int_{\R} f(\cdot - n\h\bu)\chi\left(\bu\right)\d \bu.
\end{align*}
This means that $\Jh$ is a convolution operator: $\Jh f= f * \psi_h$, where 
\begin{align}\label{eq:smoothingKernel}
\psi_h \defeq \sum_{n=1}^{\L} (-1)^{n-1}\binom{\L}{n}\frac{1}{(n\h)^d}\sigma_{n\h}\chi.
\end{align}

The following result shows that the weighted norm of the error between a function $f\in H^L_{p,-\alpha}(\R)$ and its smoothed version $\Jh f$ is $O(h^L)$ as $h$ tends to $0$.
\begin{prop}\label{thm:f-Jf}
	For $1\leq p\leq \infty$, $L\in \Zplus$, $\alpha\geq 0$, and $\Jh$ being the smoothing operator defined in~\eqref{eq:smoothing}, there exists a constant $C_{L,\alpha}$ such that, for all $f\in H^L_{p,-\alpha}(\R)$ and for all $h\in (0,1)$,
	\begin{align}\label{eq:bound_f-Jf}
	\wn{f-\Jh f} \leq C_{L,\alpha} \cdot\h^{L}\cdot\wn{f^{(L)}}.
	\end{align}
\end{prop}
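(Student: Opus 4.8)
The plan is to exploit the representation $\Jh f = f * \psi_h$ with $\psi_h$ the kernel in~\eqref{eq:smoothingKernel}, and then recognize the error $f-\Jh f$ as an average of $L$-th order finite differences that we can bound by a weighted Taylor/integral estimate. Concretely, from
\[
f-\Jh f = \int_{\R}\Delta_{\h\bu}^{L}f(\cdot)\,\chi(\bu)\,\d\bu ,
\]
and since $\supp(\chi)\subset[-1,1]^d$, only values $\|\h\bu\|\le \sqrt d\,\h$ contribute. The first step is the classical identity expressing the $L$-th difference as an $L$-fold integral of the $L$-th directional derivative: for $\bu\in[-1,1]^d$,
\[
\Delta_{\h\bu}^{L}f(\bx) = (-1)^{L}\int_{[0,1]^{L}} (\h)^{L}\,D_{\bu}^{L}f\!\left(\bx - \h(t_1+\cdots+t_L)\bu\right)\d t_1\cdots \d t_L ,
\]
valid a priori for smooth $f$ and extended to $f\in H^L_{p,-\alpha}(\R)$ by density (here I would use that $\bump$ or $\cS(\R)$ is dense in the relevant space, or simply argue via mollification). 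This already produces the factor $\h^{L}$; what remains is to control the weighted $L_p$ norm of the integrand by $\wn{f^{(L)}}$.

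The second step is to take the $L_{p,-\alpha}$ norm of $f-\Jh f$ and move it inside the integrals using Minkowski's integral inequality, reducing matters to estimating, uniformly over $\bu\in[-1,1]^d$ and over $t:=t_1+\cdots+t_L\in[0,L]$, the quantity
\[
\norm{\,\w{\cdot}{-\alpha}\,\bigl|D_{\bu}^{L}f(\cdot - \h t\bu)\bigr|\,}_{L_p(\R)} .
\]
Substituting $\bx\mapsto \bx+\h t\bu$, this equals $\norm{\,\w{\cdot+\h t\bu}{-\alpha}\,|D_{\bu}^{L}f|\,}_{L_p(\R)}$, and now the weight shift is handled by the (weak) submultiplicativity $\w{\bx+\h t\bu}{-\alpha}\le C_\alpha \w{\h t\bu}{\alpha}\w{\bx}{-\alpha}$ stated in the preliminaries; since $\|\h t\bu\|\le L\sqrt d$ for $h\in(0,1)$, the factor $\w{\h t\bu}{\alpha}$ is bounded by a constant $C_{L,\alpha}$ independent of $h$. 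Finally, $|D_{\bu}^{L}f|$ is a sum over multi-indices $|\bl|=L$ of $\binom{L}{\bl}|\bu^{\bl}||\partial^{\bl}f|$ with $|\bu^{\bl}|\le 1$ on the cube, so $|D_{\bu}^{L}f|\le C_L\, f^{(L)}$ pointwise, giving $\norm{\w{\cdot}{-\alpha}|D_{\bu}^{L}f|}_{L_p}\le C_L\,\wn{f^{(L)}}$. Collecting the constants $\norm{\chi}_{L_1}=1$, the combinatorial factors, and $C_{L,\alpha}$ yields~\eqref{eq:bound_f-Jf}.

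I expect the main obstacle to be the rigorous justification of the Taylor-with-integral-remainder identity for $\Delta_{\h\bu}^{L}f$ when $f$ is only a distribution in $H^L_{p,-\alpha}(\R)$ rather than a classical $C^L$ function: one must verify the identity in the distributional sense, or approximate $f$ by smooth functions and pass to the limit while keeping control of the weighted norms (the weight $\w{\cdot}{-\alpha}$ decays, so approximating by compactly supported smooth functions is delicate, but mollification $f_\varepsilon = f*\rho_\varepsilon$ preserves membership in $H^L_{p,-\alpha}$ with uniform bounds, since $\rho_\varepsilon$ has compact support and $\w{\cdot}{-\alpha}$ is submultiplicative). A secondary, more bookkeeping-level subtlety is the case $p=\infty$, where Minkowski's integral inequality must be replaced by the elementary bound that the essential supremum of an average is at most the supremum of the essential suprema; and one should note that for the directional-derivative identity to make sense one works componentwise, writing $\Delta_{\h\bu}$ as a composition of one-dimensional differences along $\bu$. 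Modulo these standard density/mollification arguments, the estimate is a direct chain of Minkowski, the weight-shift inequality, and the pointwise domination $|D_\bu^L f|\le C_L f^{(L)}$.
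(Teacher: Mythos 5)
Your argument is essentially the paper's own proof: you start from the same decomposition $f-\Jh f=\int_{\R}\Delta^{L}_{h\bu}f\,\chi(\bu)\,\d\bu$, your $L$-fold integral over $[0,1]^L$ is exactly the paper's Lemma~\ref{thm:Bspline} (integrating $D^{L}_{h\bu}f$ against the B-spline $\beta^{L-1}$, which is the density of $t_1+\cdots+t_L$), and the remaining chain — Minkowski's integral inequality, the submultiplicative weight shift $\w{\bx}{-\alpha}\le C_\alpha\w{th\bu}{\alpha}\w{\bx-th\bu}{-\alpha}$ with $\w{th\bu}{\alpha}$ bounded uniformly for $h\in(0,1)$, $t\in[0,L]$, $\bu\in[-1,1]^d$, and the pointwise domination $|D^{L}_{\bu}f|\le C_L f^{(L)}$ (the paper's Lemma~\ref{thm:direction}, with constant $\|\bu\|_\infty^L$) — is the same. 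The one substantive difference is how the finite-difference identity is justified for non-smooth $f$: the paper proves it once for \emph{every} $f\in\cS'(\R)$ by a Fourier-domain computation (both sides have transform $(1-\e^{-\j\ip{\bu}{\cdot}})^L\hat f$), and the authors explicitly remark that this is done to avoid the density argument of Jia--Lei, which is unavailable in the weighted setting. Your mollification fallback is fine for $p<\infty$ (where $f*\rho_\varepsilon\to f$ in $L_{p,-\alpha}(\R)$), but for $p=\infty$ mollification does not converge in the weighted sup-norm, so the clean route is your first alternative — verify the identity distributionally (or a.e.\ via Lebesgue points) — which is precisely what Lemma~\ref{thm:Bspline} accomplishes. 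One minor slip: with $\Delta_{\bu}f=f-f(\cdot-\bu)$ the identity carries no $(-1)^L$; this is harmless since you immediately take absolute values, but it should be corrected.
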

\begin{proof}
We first need the following two lemmas whose proofs can be found in Section~\ref{sec:proofs}.
\begin{lem}\label{thm:Bspline}
	Let $L\in\NN$ and let $\beta^{L-1}$ be the (1-D) B-spline of order $(L-1)$ given by the $L$-fold convolution
	\begin{align*}
		\beta^{L-1}\defeq\underbrace{\beta^0 * \beta^0*\cdots * \beta^0}_{ L \text{ times}},
	\end{align*}
	where
	$$
		\beta^0(x) \defeq
		\begin{cases}
			1, & 0< x < 1\\
			0, & \text{otherwise}
		\end{cases}.
	$$
	Then, for all $f\in\cS'(\R)$, one has
	\begin{align}\label{eq:desired}
	\Delta^{L}_{\bu}f = \int_{\RR}D^{L}_{\bu}f(\cdot - t\bu) \beta^{L-1}(t)\d t.
	\end{align}
\end{lem}

\begin{lem}\label{thm:direction} 
	Let $L\in\NN$ and $\bu\in\R$. If $f\in\cS'(\R)$ such that its partial derivatives up to order $L$ are locally integrable functions, then
	\begin{align}\label{eq:direction}
	\left|D^L_{\bu}f(\bx)\right| \leq \|\bu\|^L_{\infty} \cdot f^{(L)}(\bx),\quad\forall \bx\in\R,
	\end{align}
	where $\|\bu\|_{\infty}\defeq \max\{|u_1|,\ldots,|u_d|\}$.
\end{lem}

We remark that Lemma~\ref{thm:Bspline} is an extension of Peano's theorem~\cite[page 70]{Davis:1975} for smooth functions. It is needed to avoid the density argument in the proof of~\cite[Theorem 3.3]{JiaL:1993} that is unavailable in the weighted case. Let us continue with the proof of Proposition~\ref{thm:projection}. Observe that
	\begin{align*}
		(f - \Jh f)(\bx) &= f(\bx)\int_{\R} \chi\left(\bu\right)\d \bu - \int_{\R}\left(f - \Delta_{\h\bu}^{\L}f\right)(\bx) \chi\left(\bu\right)\d \bu\\
		&= \int_{\R} \Delta_{\h\bu}^{\L}(\bx) \chi\left(\bu\right)\d\bu.
	\end{align*}	
	From Lemma~\ref{thm:Bspline} and by taking into account the fact that $\supp(\chi)\subset [-1,1]^d$ and $\supp(\beta^{L-1})=[0,L]$, we write
		\begin{align*}
		(f - \Jh f)(\bx)
		&= \int_{[-1,1]^d} \int_{0}^{L}D^{L}_{\h\bu}f(\bx - t\h\bu) \beta^{L-1}(t)\chi\left(\bu\right)\d \bu\,\d t.
		\end{align*}
	It then follows from Minkowski's inequality and Lemma~\ref{thm:direction} that
	\begin{align}
		\wn{f-\Jh f} &\leq \int_{[-1,1]^d} \int_{\RR}\wn{D^{L}_{\h\bu}f(\cdot - t\h\bu)} \beta^{L-1}(t)\chi\left(\bu\right)\d \bu\,\d t\nonumber\\
		&\leq \int_{[-1,1]^d} \int_{\RR}\|h\bu\|^L_{\infty}\cdot\wn{f^{(L)}(\cdot - t\h\bu)} \beta^{L-1}(t)\chi\left(\bu\right)\d \bu\,\d t\nonumber\\
		&\leq h^{L}\cdot\int_{[-1,1]^d} \int_{\RR}\wn{f^{(L)}(\cdot - t\h\bu)} \beta^{L-1}(t)\chi\left(\bu\right)\d \bu\,\d t.\label{eq:double_int}
	\end{align}
	On the other hand,
	\begin{align*}
		\wn{f^{(L)}(\cdot - t\h\bu)} &= \left(\int_{\R}\left|\w{\bx}{-\alpha}f^{(L)}(\bx - t\h\bu)\right|^p\d\bx\right)^{1/p}\\
		&\leq C_{\alpha} \w{th\bu}{\alpha}\left(\int_{\R}\left|\w{\bx-th\bu}{-\alpha}f^{(L)}(\bx - t\h\bu)\right|^p\d\bx\right)^{1/p}\\
		&= C_{\alpha} \w{th\bu}{\alpha} 	\wn{f^{(L)}}.
	\end{align*}
	Thus, for $t\in [0,L]$ and $h\in(0,1)$,
	\begin{align}\label{eq:norm_bound}
	\wn{f^{(L)}(\cdot - t\h\bu)}&\leq  C_{\alpha}\, L^{\alpha}\w{\bu}{\alpha} 	\wn{f^{(L)}}.
	\end{align}
	Combining~\eqref{eq:norm_bound} with~\eqref{eq:double_int} leads to 
	\begin{align*}
	\wn{f-\Jh f} 
	&\leq C_{\alpha}\,L^{\alpha}\cdot\h^L\cdot  \wn{f^{(L)}}\int_{0}^{L}\beta^{L-1}(t)\d t\int_{[-1,1]^d} \w{\bu}{\alpha}\chi\left(\bu\right)\d \bu\\
	&=C_{L,\alpha} \cdot\h^{L}\cdot\wn{f^{(L)}},
	\end{align*}
	which completes the proof.
\end{proof}

%\begin{prop}\label{thm:f-Jf2}
%	For $1\leq p\leq \infty$, $L\in \Zplus$, $\alpha\geq 0$, and $\Jh$ being the smoothing operator defined in~\eqref{eq:smoothing}, there exists a constant $C_{L,\alpha}$ such that, for all $f\in H^L_{p,-\alpha}(\R)$,
%	\begin{align}\label{eq:bound_f-Jf2}
%	\norm{f-\Jh f}_{L_{p,-\alpha,1/h}(\R)} \leq C_{L,\alpha} \cdot\h^{L}\cdot\norm{f^{(L)}}_{L_{p,-\alpha,1/h}(\R)},
%	\end{align}
%	when $h\rightarrow 0$.
%\end{prop}
%\begin{proof}
%	Similar to the proof of Proposition~\ref{thm:f-Jf}, we have that
%		\begin{align}
%		\norm{f-\Jh f}_{L_{p,-\alpha,1/h}(\R)}
%		&\leq \int_{[-1,1]^d} \int_{0}^{L}\|h\bu\|^L_{\infty}\cdot\wn{f^{(L)}(\cdot - t\h\bu)} \beta^{L-1}(t)\chi\left(\bu\right)\d \bu\,\d t\\
%		&\leq C_{\alpha}\cdot\h^L\cdot  \wn{f^{(L)}}\int_{[-1,1]^d}\int_{0}^{L} \w{th\bu}{\alpha}\beta^{L-1}(t)\chi\left(\bu\right)\d \bu\,\d t\\
%		&\leq C_{\alpha}\cdot\h^L\cdot  \wn{f^{(L)}}\int_{0}^{L}\beta^{L-1}(t)\d t\int_{[-1,1]^d} L^{\alpha}\w{\bu}{\alpha}\chi\left(\bu\right)\d \bu\\
%		&=C_{L,\alpha} \cdot\h^{L}\cdot\wn{f^{(L)}}.
%		\end{align}
%\end{proof}

\begin{prop}\label{thm:error_Jf}
	Assume that $1\leq p\leq \infty$, $L\in\Zplus$, and $\alpha\geq 0$. Let $q\defeq\max(p,p')$ and let $\Jh$ be the smoothing operator defined in~\eqref{eq:smoothing}. If $\varphi$ is an element of $W_{q,L+\alpha}(\R)$   that satisfies the Strang-Fix conditions of order $L$,  then there exists a constant $C_{\varphi,L,\alpha}$ such that, for all $f\in H^{L}_{p,-\alpha}(\R)$ and for all $h\in (0,1)$,
	\begin{align*} %\label{eq:bound_error_Jf} 
	\left\|\Jh f-\Ph \Jh f\right\|_{L_{p,-\alpha}(\R)} \leq C_{\varphi,L,\alpha}\cdot h^L \cdot \wn{f^{(L)}}.
	\end{align*}
\end{prop}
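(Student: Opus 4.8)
The plan is to leverage that, by Theorem~\ref{thm:projection}, $\Ph$ is a projector onto $V_{p,-\alpha,h}(\varphi)$ whose operator norm on $L_{p,-\alpha}(\R)$ is bounded by a constant $C_{\varphi,\alpha}$ \emph{uniformly} for $h\in(0,1)$. Hence, for \emph{any} element $g_h\in V_{p,-\alpha,h}(\varphi)$ we may write $\Jh f-\Ph\Jh f=(I-\Ph)(\Jh f-g_h)$, since $\Ph g_h=g_h$, and therefore
\begin{align*}
\wn{\Jh f-\Ph\Jh f}\leq\left(1+C_{\varphi,\alpha}\right)\wn{\Jh f-g_h}.
\end{align*}
It thus suffices to exhibit one approximant $g_h$ in the shift-invariant space with $\wn{\Jh f-g_h}\leq C_{\varphi,L,\alpha}\,h^L\,\wn{f^{(L)}}$ for all $h\in(0,1)$.

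The approximant I would use is the quasi-interpolant of the \emph{smoothed} signal. Since $\varphi$ satisfies the Strang-Fix conditions of order $L$, Section~\ref{sec:Strang-Fix} supplies a quasi-interpolant $\varphi_{\rm QI}\in V_2(\varphi)$ obeying the polynomial-reproduction identity~\eqref{eq:quasi}, and, since $\varphi\in W_{q,L+\alpha}(\R)$, one can take $\varphi_{\rm QI}\in W_{q,L+\alpha}(\R)$ as well. Set $g_h\defeq\sum_{\bk\in\Z}(\Jh f)(h\bk)\,\varphi_{\rm QI}\!\left(\frac{\cdot}{h}-\bk\right)$. Because $\Jh f=f*\psi_h$ with $\psi_h\in\bump$ (see~\eqref{eq:smoothingKernel}), the function $\Jh f$ is $C^\infty$ and, for each fixed $h$, lies in $H^m_{p,-\alpha}(\R)$ for every $m$; hence its samples $\{(\Jh f)(h\bk)\}_{\bk\in\Z}$ form an element of $\ell_{p,-\alpha}(\Z)$ by the direct-sampling stability established in~\cite{NguyenU:2015b}, and, since $\varphi_{\rm QI}\in W_{1,\alpha}(\R)$, expanding $\varphi_{\rm QI}$ over the Riesz basis $\{\varphi(\cdot-\bj)\}_{\bj\in\Z}$ shows that $g_h$ has coefficients in $\ell_{p,-\alpha}(\Z)$, i.e.\ $g_h\in V_{p,-\alpha,h}(\varphi)$.

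For the error estimate I would, after rescaling by $\sigma_h$ (using the scaling relations of the type recorded in~\eqref{eq:norm_h}), follow the space-domain method of~\cite{JiaL:1993,Lei:1994}: the identity~\eqref{eq:quasi} lets one rewrite $\Jh f-g_h$ pointwise as an absolutely convergent superposition, weighted by the shifts $\varphi_{\rm QI}(\cdot/h-\bk)$, of $L$-th order finite differences $\Delta^{L}_{h\bu}(\Jh f)$ of the smoothed signal. As $\Jh f$ is smooth, Lemma~\ref{thm:Bspline} recasts each difference as an integral of directional derivatives $D^{L}_{h\bu}(\Jh f)$ against the B-spline $\beta^{L-1}$ — this is the smooth substitute for the density argument of~\cite{JiaL:1993} — and Lemma~\ref{thm:direction} bounds $\bigl|D^{L}_{h\bu}(\Jh f)\bigr|\leq\|h\bu\|_{\infty}^{L}(\Jh f)^{(L)}$, producing the factor $h^{L}$. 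Taking weighted $L_p$ norms, invoking Minkowski's inequality and the submultiplicativity of $\w{\cdot}{\alpha}$ to move the ambient weight onto the shifts, the extra $\w{\cdot}{L}$-decay built into $\varphi_{\rm QI}\in W_{q,L+\alpha}(\R)$ — combined with the series and convolution estimates of~\cite{NguyenU:2015b} — absorbs the polynomial growth $\|h\bk-\bx\|^{L}$, yielding $\wn{\Jh f-g_h}\leq C_{\varphi,L,\alpha}\,h^L\,\wn{(\Jh f)^{(L)}}$. Finally, since $(\Jh f)^{(L)}=\sum_{|\bl|=L}\bigl|(\partial^{\bl}f)*\psi_h\bigr|$ — every derivative falling on $f$ and none on the mollifier — a weighted Young inequality (again via submultiplicativity) gives $\wn{(\Jh f)^{(L)}}\leq C_{d,L,\alpha}\,\wn{f^{(L)}}$ \emph{uniformly} for $h\in(0,1)$, because the $\w{\cdot}{\alpha}$-weighted mass of $\psi_h$ is bounded independently of $h$. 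Combining this with the first paragraph completes the proof.

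The step I expect to be the main obstacle is the weighted $L_p$ bookkeeping in the third paragraph. In the classical unweighted setting the quasi-interpolation error is controlled by a clean Young-type inequality (or by Fourier methods, unavailable here), whereas now the derivative $(\Jh f)^{(L)}$ is evaluated at points of the form $(1-s)\bx+sh\bk$ that depend on the summation index $\bk$; one must therefore simultaneously (i) trade the ambient weight $\w{\bx}{-\alpha}$ for the weight at the moving evaluation point, which costs a factor $\w{\bx/h-\bk}{\alpha}$ by submultiplicativity, and (ii) absorb the remainder factor $\|\bx/h-\bk\|^{L}\sim\w{\bx/h-\bk}{L}$. It is precisely the combined factor $\w{\bx/h-\bk}{L+\alpha}$ that has to be swallowed by $\varphi_{\rm QI}$, which is why the hypothesis is $\varphi\in W_{q,L+\alpha}(\R)$ rather than the weaker $\varphi\in W_{q,\alpha}(\R)$ that sufficed for Theorem~\ref{thm:projection}; making the resulting series and integral estimates close uniformly in $h$ is the technical heart of the argument.
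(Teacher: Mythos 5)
Your strategy is viable, but it is a genuinely different route from the paper's. The paper does not introduce an auxiliary quasi-interpolant at all: using the Strang--Fix conditions it invokes the fact that $\Ph$ itself reproduces polynomials of degree less than $L$, Taylor-expands $g\defeq\Jh f$ about each point $\bx$, writes the error pointwise as $-\sum_{\bl}c_{\bx}[\bl]\varphi(\bx/h-\bl)$ with the explicit dual-kernel coefficients $c_{\bx}[\bl]=h^{-d}\int_{\R}R_{\bx}(\by)\dual(\by/h-\bl)\,\d\by$, and then runs the cell decomposition and Minkowski estimates. The decisive tool is Lemma~\ref{thm:J_bounded}, which converts weighted $\ell_p$-norms of lattice samples of $\Jh(\cdot)$ into continuous $L_{p,-\alpha}$-norms; combined with Lemmas~\ref{thm:Bspline} and~\ref{thm:direction} this puts the derivative directly on $f^{(L)}$ (no Young step at the end), while the growth factor $\w{\bx+\bl}{L+\alpha}$ is absorbed by $\dual\in L_{1,L+\alpha}(\R)$ and $\varphi\in W_{p,L+\alpha}(\R)$. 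Your route --- a Lebesgue-lemma reduction through the uniform bound of Theorem~\ref{thm:projection} (legitimate here, since the Riesz-basis hypothesis is a standing assumption of the section), followed by a quasi-interpolation estimate for the mollified signal and a weighted Young inequality --- is more modular, using the projector only through boundedness and idempotence, but it needs two ingredients the paper's direct argument avoids.

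Those two ingredients are where your write-up has gaps. First, the assertion that a quasi-interpolant can be taken in $W_{q,L+\alpha}(\R)$ does not follow from Section~\ref{sec:Strang-Fix}; you must construct it, e.g.\ as $\varphi_{\rm QI}=\sum_{\bk}b[\bk]\varphi(\cdot-\bk)$ with a \emph{finitely supported} mask $b$ whose symbol matches $1/\hat\varphi$ to order $L$ at $\zv$ (possible since $\hat\varphi(\zv)\neq 0$ and $\varphi\in W_{q,L+\alpha}(\R)\subset L_{1,L+\alpha}(\R)$ gives enough smoothness of $\hat\varphi$), so that the weighted hybrid-norm membership is inherited; note that the paper's Lemma~\ref{thm:phi_int} needs extra hypotheses on $\varphi[\cdot]$ that are not available in this proposition, so you cannot simply borrow $\varphi_{\rm int}$. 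Second, your intermediate bound $\wn{\Jh f-g_h}\leq C\,h^L\,\wn{(\Jh f)^{(L)}}$ is not what your own machinery yields and is doubtful as stated: after the cell decomposition the estimate passes through weighted $\ell_p$-norms of lattice samples of $(\Jh f)^{(L)}$ at $\bk$-dependent offsets, and such sample norms are controlled by $\wn{f^{(L)}}$ through the convolution structure $(\Jh f)^{(L)}=\sum_{|\bl|=L}\bigl|(\partial^{\bl}f)*\psi_h\bigr|$ (this is exactly the content of Lemma~\ref{thm:J_bounded} and of~\cite[Proposition 5]{NguyenU:2015b}), \emph{not} by the continuous norm of the mollified derivative; point samples of a merely smooth $L_{p,-\alpha}$-function are not stable without the extra $d/p+\varepsilon$ derivatives the paper emphasizes. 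The repair is simply to prove the quasi-interpolation error bound directly with $\wn{f^{(L)}}$ on the right-hand side, which also renders your final Young-inequality step unnecessary; with these fixes your argument goes through.
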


\begin{proof}
	We begin the proof with a lemma; its proof is given in Section~\ref{sec:proofs}.
	\begin{lem}\label{thm:J_bounded}
		Let $w_h(\bx)\defeq \w{h\bx}{\alpha}$, $\alpha\geq 0$. Then, there exists a constant $C_{L,\alpha}$ such that, for all $f\in L_{p,-\alpha}(\R)$ and for all $h\in (0,1)$,
		\begin{align}\label{eq:bound_Jf}
		\left\|(\sigma_{\Frac{1}{h}}\Jh f)[\cdot]\right\|_{\ell_{p,1/w_h}(\Z)} \leq C_{L,\alpha}\cdot h^{-d/p}\cdot \left\|f\right\|_{L_{p,-\alpha}(\R)}.
		\end{align}
	\end{lem}
	
	Let us now put $g\defeq \Jh f$ and $e\defeq g-\Ph g$. It is clear that $g$ is infinitely differentiable. For $\bx\in\R$, let $R_{\bx}$ denote the remainder of the order-$(L-1)$ Taylor series  of function $g$ about $\bx$. 
	Since $\varphi$ satisfies the Strang-Fix conditions of order $L$, it is known~\cite{UnserD:1997} that $\Ph$ maps every polynomial of degree less than $L$ to itself. Therefore, it is possible to write
	\begin{align}
		e(\bx)	& = - \sum_{\bl\in\Z}c_{\bx}[\bl] \varphi\left(\frac{\bx}{h} - \bl\right),  \label{eq:error_g}
	\end{align} 
	where the sequence $c_{\bx}$ is given by
	\begin{align}\label{eq:coefficients}
		c_{\bx}[\bl] &\defeq \frac{1}{h^d}\int_{\R}R_{\bx}(\by)\dual\left(\frac{\by}{h} - \bl\right)\d\by.
	\end{align}
	 The weighted-$L_p$ norm of the projection error is then bounded as
	\begin{align}
	\left\|e\right\|^p_{L_{p,-\alpha}(\R)} &= \sum_{\bk\in\Z}\int_{[0,h]^d}  \left|\w{\bx+h\bk}{-\alpha}e(\bx+h\bk)\right|^p\d \bx\nonumber\\
	 &= h^{d}\cdot\int_{[0,1]^d}  \sum_{\bk\in\Z}\left|\w{\h\bx+h\bk}{-\alpha}e(\h\bx+h\bk)\right|^p\d \bx\nonumber\\
	&= h^{d}\cdot\int_{[0,1]^d} \sum_{\bk\in\Z} \left|\w{\h\bx+h\bk}{-\alpha}\sum_{\bl\in\Z}c_{\h\bx+h\bk}[\bl]\cdot \varphi\left(\bx+ \bk-\bl\right)\right|^p\d \bx\nonumber\\
	&\leq C_{\alpha}\cdot h^{d}\cdot\int_{[0,1]^d} \sum_{\bk\in\Z} \left(\sum_{\bl\in\Z} \w{h\bk}{-\alpha}\left|c_{\h\bx+h\bk}[\bk-\bl]\right|\cdot \left|\varphi\left(\bx+\bl\right)\right|\right)^p\d \bx\label{eq:submulti3}.
	\end{align}
	The last estimate is due to a change of variable and to the fact that $\w{h\bx+h\bk}{-\alpha}\leq C_\alpha \w{h\bk}{-\alpha}$, $\forall\bx\in[0,1]^d$, $\forall \h\in (0,1)$. Let us define the two sequences: $c_{\bx,\bl}[\bk]\defeq \w{h\bk}{-\alpha}|c_{\h\bx+h\bk}[\bk-\bl]|$ and $\varphi_{\bx}[\cdot] =|\varphi(\bx+\cdot)|$, for each $\bx\in [0,1]^d$ and each $\bl\in\Z$. Plugging these notations into~\eqref{eq:submulti3} and applying Minkowski's inequality, we obtain
		\begin{align}
		\left\|e\right\|^p_{L_{p,-\alpha}(\R)} 
		&\leq C_{\alpha}\cdot h^{d}\cdot\int_{[0,1]^d} \sum_{\bk\in\Z} \left(\sum_{\bl\in\Z}c_{\bx,\bl}[\bk]\cdot \varphi_{\bx}[\bl]\right)^p\d \bx\nonumber\\
		&\leq 	C_{\alpha}\cdot h^{d}\cdot\int_{[0,1]^d} \left(\sum_{\bl\in\Z} \|c_{\bx,\bl}\|_{\ell_{p}(\Z)}\cdot \varphi_{\bx}[\bl] \right)^p\d \bx.\label{eq:step1}
		\end{align}
		
	We now proceed to bound the quantity $\left\|c_{\bx,\bl}\right\|_{\ell_{p}(\Z)}$. By Taylor's theorem
	\begin{align}
		R_{\h\bx+\h\bk}(\h\by + \h\bk) &= \int_{0}^{1}\frac{(1-\tau)^{L-1}}{(L-1)!}S_{\tau\h\by+(1-\tau)\h\bx}D^{L}_{\h\by-\h\bx}(\Jh f)(\h\bk) \d\tau\nonumber\\
		        &= \int_{0}^{1}\Jh T_{\by,\tau} f(\h\bk) \d\tau,\label{eq:swap}
	\end{align}
	where the operator $T_{\by,\tau}$ is defined as
	\begin{align}\label{eq:operator}
	T_{\by,\tau} \defeq \frac{(1-\tau)^{L-1}}{(L-1)!} S_{\tau\h\by+(1-\tau)\h\bx}D^L_{\h\by-\h\bx}.
	\end{align}
	Note that the swapping of $T_{\by,\tau}$ and $\Jh$ in~\eqref{eq:swap} is justified because $\Jh$ is a convolution operator and hence commutes with differential and shift operators. From~\eqref{eq:coefficients} and the definition of $c_{\bx,\bl}$, one has
	\begin{align}
		c_{\bx,\bl}[\bk] &=\w{h\bk}{-\alpha}\int_{\R}R_{\h\bx+\h\bk}(\h\by + \h\bk)\dual\left(\by+\bl\right)\d\by\nonumber\\
		&=\int_{\R}\dual\left(\by+\bl\right)\int_{0}^{1}\frac{1}{w_h(\bk)}\cdot\Jh T_{\by,\tau} f(\h\bk)\d\tau\d \by,
	\end{align}
	where $w_h\defeq \w{\h\cdot}{\alpha}$. By Minkowski's inequality and by Lemma~\ref{thm:J_bounded}
	\begin{align}
		\|c_{\bx,\bl}\|_{\ell_{p}(\Z)} &\leq \int_{\R}\left|\dual\left(\by+\bl\right)\right|\int_{0}^{1} \left\|(\sigma_{1/h}\Jh T_{\by,\tau}f)[\cdot]\right\|_{\ell_{p,1/w_h}(\Z)}\d\tau \d\by\nonumber\\
		&\leq C_{L,\alpha} \cdot h^{-d/p}\int_{\R}\left|\dual\left(\by+\bl\right)\right|\int_{0}^{1} \left\| T_{\by,\tau}f\right\|_{L_{p,-\alpha}(\R)}\d\tau \d\by\label{eq:c_T}.
	\end{align}
	On the other hand
	\begin{align}
	 \left\| T_{\by,\tau}f\right\|_{L_{p,-\alpha}(\R)} &=  \frac{(1-\tau)^{L-1}}{(L-1)!}  \cdot \left\| S_{\tau\h\by+(1-\tau)\h\bx}D^L_{\h\by-\h\bx}f\right\|_{L_{p,-\alpha}(\R)} \nonumber\\
	&\leq C_{L}\cdot \left\| f^{(L)}(\cdot -\tau\h\by-(1-\tau)\h\bx)\right\|_{L_{p,-\alpha}(\R)}\|\h\by-\h\bx\|^{L}\label{eq:lemma2}\\
	&\leq C_{L,\alpha}\cdot\left\| f^{(L)}\right\|_{L_{p,-\alpha}(\R)}\cdot \w{\tau\h\by+(1-\tau)\h\bx}{\alpha}\|h\by-h\bx\|^{L}\label{eq:submulti4}\\
	&\leq C_{L,\alpha}\cdot h^L\cdot\left\| f^{(L)}\right\|_{L_{p,-\alpha}(\R)} \cdot\w{\by-\bx}{\alpha}\|\by-\bx\|^{L}\label{eq:para_bounded}\\
	&\leq C_{L,\alpha}\cdot h^L\cdot\left\| f^{(L)}\right\|_{L_{p,-\alpha}(\R)} \cdot\w{\by-\bx}{L+\alpha},\label{eq:T}
	\end{align}
	where~\eqref{eq:lemma2} follows from Lemma~\ref{thm:direction}; \eqref{eq:submulti4} is due to the submultiplicativity of the weight $\w{\cdot}{\alpha}$; and~\eqref{eq:para_bounded} is because $h,\tau\in(0,1)$ and $\bx\in [0,1]^d$. Putting~\eqref{eq:c_T} and~\eqref{eq:T} together
		\begin{align}
		\|c_{\bx,\bl}\|_{\ell_{p}(\Z)} 
		&\leq C_{L,\alpha}\cdot h^{-d/p}\cdot h^{L} \left\| f^{(L)}\right\|_{L_{p,-\alpha}(\R)}\int_{\R}\w{\by-\bx}{L+\alpha}|\dual(\by+\bl)|  \d\by\nonumber \\
		&= C_{L,\alpha}\cdot h^{-d/p} \cdot h^{L}\left\| f^{(L)}\right\|_{L_{p,-\alpha}(\R)}\int_{\R}\w{\by-\bl-\bx}{L+\alpha}|\dual(\by)|   \d\by\nonumber\\
		&\leq C_{L,\alpha}\cdot h^{-d/p} \cdot h^{L}\left\| f^{(L)}\right\|_{L_{p,-\alpha}(\R)}\w{\bx+\bl}{L+\alpha}\|\dual\|_{L_{1,L+\alpha}(\R)}\label{eq:bound_L1}.
		\end{align}
	The last estimate is again due to the submultiplicativity of the weight $\w{\cdot}{\alpha}$.
	
	Since $\varphi\in W_{q,L+\alpha}(\R)$, it follows from~\cite[Proposition 6]{NguyenU:2015b} that $\dual$ also belongs to $W_{q,L+\alpha}(\R)$. Since $W_{q,L+\alpha}(\R)\subset W_{1,L+\alpha}(\R)= L_{1,L+\alpha}(\R)$, it must be that $\dual \in L_{1,L+\alpha}(\R)$ and so the right-hand side of~\eqref{eq:bound_L1} is finite.
	Plugging~\eqref{eq:bound_L1} into~\eqref{eq:step1} yields
		\begin{align*}
		\left\|e\right\|_{L_{p,-\alpha}(\R)} 
		&\leq 	C_{L,\alpha}\cdot h^L\cdot\left\| f^{(L)}\right\|_{L_{p,-\alpha}(\R)}\|\dual\|_{L_{1,L+\alpha}(\R)}\left(\int_{[0,1]^d} \left(\sum_{\bl\in\Z}  \w{\bx+\bl}{L+\alpha}|\varphi(\bx+\bl)| \right)^p\d \bx\right)^{1/p}\nonumber\\
		&= 	\underbrace{C_{L,\alpha}\cdot\|\dual\|_{L_{1,L+\alpha}(\R)}\cdot\|\varphi\|_{W_{p,L+\alpha}(\R)}}_{C_{\varphi,L,\alpha}}\cdot  h^L\cdot\left\| f^{(L)}\right\|_{L_{p,-\alpha}(\R)},
		\end{align*}
	which is the desired bound.
%	where the sequence $a_{\bx}$ is defined for each $\bx\in [0,1]^d$ by
%	\begin{align}
%		a_{\bx}[\bk]\defeq e(\bx+\bk),\quad \bk\in\Z.
%	\end{align}
%	From~\eqref{eq:error_g} we have that
%	\begin{align}
%		a_{\bx}[\bk] = 
%	\end{align}
\end{proof}

%\begin{prop}\label{thm:error_h=1}
%	Assume that $1\leq p\leq \infty$, $L\in\Zplus$, and $\alpha\geq 0$. Let $q\defeq\max(p,p')$. If $\varphi$ is an element of $W_{q,L+\alpha}(\R)$   that satisfies the Strang-Fix conditions,  then, for all $f\in H^{L}_{p,-\alpha}(\R)$, 
%	\begin{align}\label{eq:error_h=1}
%	\left\|f-\P f\right\|_{L_{p,-\alpha}(\R)} \leq C_{\varphi,L,\alpha} \wn{f^{(L)}}.
%	\end{align}
%\end{prop}
With the above results in hands, we are now ready to prove Theorem~\ref{thm:main}.
\begin{proof}[Proof of Theorem~\ref{thm:main}]	
Without loss of generality, assume that $h\in (0,1)$. Put $g\defeq \Jh f$. By using the triangle inequality and by applying Theorem~\ref{thm:projection}, we have that
\begin{align*}
	\wn{f-\Ph f}&\leq \wn{f-g} + \wn{\Ph f - \Ph g} + \wn{g - \Ph g}\\
			   &\leq (1+C_{\varphi,\alpha})\wn{f-g} + \wn{g - \Ph g}.	
\end{align*}
This bound together with Propositions~\ref{thm:f-Jf} and~\ref{thm:error_Jf} immediately implies~\eqref{eq:bound_error}, completing the proof. 	
\end{proof}

%Note that, in Theorem~\ref{thm:main}, the decay rate of the projection error is $O(h^{L-\alpha})$ as $h\rightarrow 0$, which is slower than the unweighted case. However, we can trade this decay with the smoothness of $f$. In particular, we have the following result.
%\begin{thm}\label{thm:trade}
%	Assume that $1\leq p\leq \infty$, $L\in\Zplus$, and $\alpha\geq 0$. Let $q\defeq\max(p,p')$. If $\varphi$ is an element of $W_{q,L+\alpha}(\R)$   that satisfies the Strang-Fix conditions of order $L-1$, then, for all $f\in H^{L+\lceil\alpha\rceil}_{p,-\alpha}(\R)$,
%	\begin{align} 
%	\wn{f-P_{\varphi,h}f}  \leq  C_{\varphi,L,\alpha}\max(h^{L},h^{L+2\ceil{\alpha}}) \wn{f^{(L+\lceil\alpha\rceil)}}.
%	\end{align}
%\end{thm}

%============================================================================================================================
%============================================================================================================================
\section{Interpolation Error Bound}\label{sec:interp}
We consider in this section the approximation scheme in which a function is ideally sampled (without a prefilter) and reconstructed using an interpolating kernel.
The interpolation operator associated with kernel $\varphi$ and sampling step $h$ is defined by
\begin{align}\label{eq:interp2}
\Ih: f\mapsto \tilde{f}_{\rm int}=\sum_{\bk\in\Z} f(h\bk)\varphi_{\rm int}\left(\frac{\cdot}{h}-\bk\right),
\end{align}
where the interpolant $\varphi_{\rm int}$ is related to the kernel $\varphi$ by 
\begin{align}\label{eq:phi_int}
\varphi_{\rm int} \defeq \sum_{\bk\in\Z} a[\bk] \varphi(\cdot - \bk),
\end{align}
and where the discrete filter $a$ is given in the Fourier domain by
\begin{align}\label{eq:filter}
\hat{a}(\bomega) \defeq \frac{1}{\sum_{\bk\in\Z}\varphi(\bk)\e^{-\j\ip{\bomega}{\bk}}}.
\end{align}
This filter is to make sure that $f(\h\bk) = f_{\rm int} (\h\bk)$, for all $\bk\in\Z$. We have assumed implicitly in~\eqref{eq:filter} that $\sum_{\bk\in\Z}\varphi(\bk)\e^{-\j\ip{\bomega}{\bk}}$ is nonzero for almost all $\bomega\in\R$. It is noteworthy that, in the absence of a prefilter, the function $f$ to be approximated has to be continuous everywhere for the sampling to make sense.   

Another way to express~\eqref{eq:interp2} is
\begin{align}\label{eq:interp1}
\Ih: f\mapsto \tilde{f}_{\rm int}=\sum_{\bk\in\Z} c[\bk]\varphi\left(\frac{\cdot}{h}-\bk\right),
\end{align}
where $c \defeq (\sigma_{1/h} f)[\cdot] * a$ is the sampled sequence of $f$ discretely filtered by $a$. We write $\I$ for $I_{\varphi,1}$.

%\begin{prop}\label{thm:equivalence}
%	For $1<p<\infty$ and $k,n\in\NN$, the two spaces $L^k_{p,-2n}(\R)$ and $H^k_{p,-2n}(\R)$ are identical and the corresponding norms are equivalent.
%\end{prop}

%\begin{prop}\label{thm:embed}
%	For $1\leq p \leq \infty$, $s > r$, and $\alpha\in \RR$, $L^s_{p,\alpha}(\R)$ continuously embeds in $L^r_{p,\alpha}(\R)$.
%\end{prop}
%\begin{prop}\label{thm:dense}
%	For $1\leq p <\infty$, $k\in\NN$, and $\alpha\geq 0$, the space $C^{k}(\R)\cap H^k_{p,-\alpha}(
%	\R)$ is dense in $H^{k}_{p,-\alpha}(\R)$.
%\end{prop}

The following lemma says that the interpolant $\varphi_{\rm int}$ and the kernel $\varphi$ can be made to lie in the same weighted hybrid-norm space by imposing on $\varphi$ some mild conditions that are satisfied by, for example, B-splines of all orders.
\begin{lem}\label{thm:phi_int}
	 Let $1\leq p\leq \infty$ and $\alpha\geq 0$. Let $\varphi\in W_{p,\alpha}(\R)$ such that $\varphi[\cdot]\in \ell_{1,\alpha}(\Z)$ and $\sum_{\bk\in\Z}\varphi[\bk]\e^{-\j\ip{\bomega}{\bk}}$ is nonzero for almost all $\bomega\in\R$. Then, the corresponding interpolant $\varphi_{\rm int}$ defined in~\eqref{eq:phi_int} also belongs to $W_{p,\alpha}(\R)$.
\end{lem}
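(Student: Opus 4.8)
The plan is to argue in two stages. First I would show that the discrete filter $a$ of \eqref{eq:filter} belongs not merely to $\ell_1(\Z)$ but to the weighted space $\ell_{1,\alpha}(\Z)$; then I would show that the semi-discrete convolution $\sum_{\bk\in\Z}a[\bk]\varphi(\cdot-\bk)$ keeps us inside $W_{p,\alpha}(\R)$ as soon as $a\in\ell_{1,\alpha}(\Z)$ and $\varphi\in W_{p,\alpha}(\R)$. Combining the two gives $\varphi_{\rm int}\in W_{p,\alpha}(\R)$ via \eqref{eq:phi_int}.

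For the first stage, observe that since $\varphi[\cdot]\in\ell_{1,\alpha}(\Z)\subset\ell_1(\Z)$, the symbol $b(\bomega)\defeq\sum_{\bk\in\Z}\varphi[\bk]\e^{-\j\ip{\bomega}{\bk}}$ is a continuous, $2\pi\Z$-periodic function which, by the nonvanishing hypothesis together with compactness of the torus, is bounded below in modulus by a positive constant; the symbol of $a$ is then $\hat a=1/b$. To upgrade the conclusion from $a\in\ell_1(\Z)$ (which is the classical Wiener lemma) to $a\in\ell_{1,\alpha}(\Z)$, I would invoke the inverse-closedness of the weighted convolution algebra $\ell_{1,\alpha}(\Z)$ in $C(\T)$. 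This is exactly the Gelfand--Raikov--Shilov theorem, and it applies here because the Sobolev weight $\w{\cdot}{\alpha}$ is submultiplicative and satisfies the GRS condition $\lim_{n\to\infty}\w{n\bk}{\alpha}^{1/n}=1$ --- both properties recorded in Section~\ref{sec:prelim} (see \cite{GelfandRS:1964} and also \cite[Section 9]{Grochenig:2006}). Hence $1/b$ has an absolutely convergent Fourier series relative to the weight $\w{\cdot}{\alpha}$, i.e.\ $\sum_{\bk\in\Z}|a[\bk]|\w{\bk}{\alpha}<\infty$.

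For the second stage, I would establish the module-type estimate
\begin{align*}
\norm{\varphi_{\rm int}}_{W_{p,\alpha}(\R)}\;\leq\;C_{\alpha}\,\norm{a}_{\ell_{1,\alpha}(\Z)}\,\norm{\varphi}_{W_{p,\alpha}(\R)}.
\end{align*}
The argument is routine: apply the triangle inequality for $\norm{\cdot}_{W_p(\R)}$ to $\w{\cdot}{\alpha}\sum_{\bk}a[\bk]\varphi(\cdot-\bk)$ to pull the sum outside; use the invariance of $\norm{\cdot}_{W_p(\R)}$ under integer shifts to rewrite $\w{\cdot}{\alpha}\varphi(\cdot-\bk)$ with the weight recentered as $\w{\cdot+\bk}{\alpha}\varphi$; then apply the weak submultiplicativity $\w{\bx+\bk}{\alpha}\leq C_\alpha\w{\bx}{\alpha}\w{\bk}{\alpha}$ from Section~\ref{sec:prelim} to extract the factor $\w{\bk}{\alpha}$, leaving $\norm{\w{\cdot}{\alpha}\varphi}_{W_p(\R)}=\norm{\varphi}_{W_{p,\alpha}(\R)}$; finally sum against $|a[\bk]|$ to produce $\norm{a}_{\ell_{1,\alpha}(\Z)}$. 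Applying the same estimate to partial sums shows the series defining $\varphi_{\rm int}$ converges in $W_{p,\alpha}(\R)$, so the manipulations are legitimate and $\varphi_{\rm int}\in W_{p,\alpha}(\R)$.

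I expect the genuine obstacle to be the first stage --- promoting $a\in\ell_1(\Z)$ to $a\in\ell_{1,\alpha}(\Z)$. This is precisely where the Fourier-analytic shortcuts available in the unweighted theory break down and one must appeal to inverse-closedness; it is also where the specific structure of the Sobolev weight (submultiplicative, subexponential growth) is indispensable. The second stage is essentially bookkeeping with submultiplicativity and translation invariance of the hybrid norm, entirely in the spirit of the weight manipulations already carried out in the proofs of Theorem~\ref{thm:projection} and Proposition~\ref{thm:error_Jf}.
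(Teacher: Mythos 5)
Your proposal is correct and follows essentially the same route as the paper: the paper likewise obtains $a\in\ell_{1,\alpha}(\Z)$ from the weighted Wiener lemma (citing \cite[Theorem 6.2]{Grochenig:2006}, which rests on exactly the submultiplicativity and GRS properties you invoke), and then concludes via the module estimate $\norm{\varphi_{\rm int}}_{W_{p,\alpha}(\R)}\leq C_\alpha\norm{a}_{\ell_{1,\alpha}(\Z)}\norm{\varphi}_{W_{p,\alpha}(\R)}$, which it cites as \cite[Lemma 1]{NguyenU:2015b} and you simply re-derive by hand. The only cosmetic difference is that you spell out that second step rather than quoting it, so no gap.
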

\begin{proof} Section~\ref{sec:proofs}
\end{proof}

The next result is the interpolation counterpart of Theorem~\ref{thm:projection} and can be thought of as the scaled version of~\cite[Proposition 9]{NguyenU:2015b}. It asserts that $I_{\varphi,h}$ is a bounded operator from $L^{d/p+\varepsilon}_{p,-\alpha}(\R)$ to $V_{p,-\alpha,h}(\varphi)$ whose norm is bounded as  $h\rightarrow 0$. The underlying condition is that the interpolant $\varphi_{\rm int}$ belongs to the weighted hybrid-norm space $W_{p,\alpha}(\R)$.
\begin{thm}\label{thm:interp}
	Assume that $1\leq p \leq \infty$, $\alpha\geq 0$, and $r>d/p$. Let $\varphi\in W_{p,\alpha}(\R)$ such that $\varphi[\cdot]\in \ell_{1,\alpha}(\Z)$ and $\sum_{\bk\in\Z}\varphi[\bk]\e^{-\j\ip{\bomega}{\bk}}$ is nonzero for almost all $\bomega\in\R$. Then, there exists a constant $C_{\varphi,r,\alpha}$ such that, for all continuous functions $f\in L^{r}_{p,-\alpha}(\R)$ and for all $h\in (0,1)$,
	\begin{align}\label{eq:desired_interp2}
		\wn{\Ih f} \leq C_{\varphi,r,\alpha}\cdot \norm{f}_{ L^{r}_{p,-\alpha}(\R)}.
	\end{align}
\end{thm}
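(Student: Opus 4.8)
The plan is to reduce Theorem~\ref{thm:interp} to the unweighted-scale machinery of~\cite{NguyenU:2015b} by the same conjugation trick used in Theorem~\ref{thm:projection}. First I would write $I_{\varphi,h} = \sigma_{h}\, I_{\varphi}\,\sigma_{1/h}$ directly from~\eqref{eq:interp2}: indeed $(\sigma_{1/h}f)(\bk) = f(h\bk)$, so $I_{\varphi}(\sigma_{1/h}f) = \sum_{\bk} f(h\bk)\,\varphi_{\rm int}(\cdot-\bk)$, and applying $\sigma_h$ recovers $\tilde f_{\rm int}$. As in the proof of Theorem~\ref{thm:projection}, introduce the scale-dependent weight $w_h(\bx)\defeq\w{h\bx}{\alpha}$, which is (weakly) submultiplicative with a constant $C_\alpha$ independent of $h$ by~\eqref{eq:submulti}, and use the change of variable identity $\norm{\sigma_h g}_{L_{p,-\alpha}(\R)} = h^{d/p}\norm{g}_{L_{p,1/w_h}(\R)}$, and likewise for the $L^r$ norm. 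This converts~\eqref{eq:desired_interp2} into a bound of the form $\norm{I_{\varphi}(\sigma_{1/h}f)}_{L_{p,1/w_h}(\R)} \le C_{\varphi,r,\alpha}\,\norm{\sigma_{1/h}f}_{L^{r}_{p,1/w_h}(\R)}$ that is uniform in $h\in(0,1)$.

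Next I would invoke the unweighted-scale (i.e. $h=1$) boundedness of the interpolation operator, which is~\cite[Proposition 9]{NguyenU:2015b}: for a kernel whose interpolant lies in a weighted hybrid-norm space and under the sampling hypotheses on $\varphi[\cdot]$, the operator $I_{\varphi}$ maps $L^{r}_{p,w}(\R)$ to $V_{p,w}(\varphi)$ boundedly for any submultiplicative weight $w$, with operator norm controlled by $\norm{\varphi_{\rm int}}_{W_{p,w}(\R)}$ (and the norm of the filter $a$, hence of $\varphi[\cdot]$ in $\ell_{1,w}$). Two ingredients feed this: Lemma~\ref{thm:phi_int} gives $\varphi_{\rm int}\in W_{p,\alpha}(\R)$ from $\varphi\in W_{p,\alpha}(\R)$ and $\varphi[\cdot]\in\ell_{1,\alpha}(\Z)$; and, just as in~\eqref{eq:projector_bound2}, the monotonicity $w_h\le\w{\cdot}{\alpha}$ for $h\in(0,1)$ gives $\norm{\varphi_{\rm int}}_{W_{p,w_h}(\R)}\le\norm{\varphi_{\rm int}}_{W_{p,\alpha}(\R)}<\infty$ and similarly for $\norm{\varphi[\cdot]}_{\ell_{1,w_h}(\Z)}\le\norm{\varphi[\cdot]}_{\ell_{1,\alpha}(\Z)}<\infty$. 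The submultiplicativity constant that appears when one factors the weight across the sampling/synthesis steps is the $C_\alpha$ of~\eqref{eq:submulti}, again uniform in $h$. Combining these with the change of variables and reversing $\sigma_h$ yields~\eqref{eq:desired_interp2} with a constant depending only on $\varphi,r,\alpha$.

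The point I expect to need most care is the direct-sampling stability embedded in~\cite[Proposition 9]{NguyenU:2015b}: unlike the projection scheme, where the coefficients $c[\bk]$ are integrals against $\dual$ and hence automatically controlled in $\ell_{p,w}$ by the hybrid norm of $\dual$, here $c[\bk] = ((\sigma_{1/h}f)[\cdot]*a)[\bk]$ involves the pointwise samples $f(h\bk)$, so one must control $\norm{(\sigma_{1/h}f)[\cdot]}_{\ell_{p,1/w_h}(\Z)}$ by $\norm{\sigma_{1/h}f}_{L^{r}_{p,1/w_h}(\R)}$. This is exactly the weighted Sobolev embedding / sampling bound from~\cite{NguyenU:2015b} that forces the extra smoothness order $r>d/p$, and it is where the hypothesis $f\in L^{r}_{p,-\alpha}(\R)$ (rather than merely $L_{p,-\alpha}(\R)$) is consumed; I would simply cite it rather than reprove it. The remaining bookkeeping — tracking that all constants ($C_\alpha$ from submultiplicativity, the hybrid norms of $\varphi_{\rm int}$ and $\varphi[\cdot]$, the $L^r_{p}$-to-$\ell_p$ sampling constant) are finite and $h$-independent for $h\in(0,1)$ — is routine and parallels the proof of Theorem~\ref{thm:projection} step by step.
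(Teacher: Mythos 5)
Your skeleton (the conjugation $\Ih=\sigma_h\I\sigma_{1/h}$, the weight $w_h\defeq\w{h\cdot}{\alpha}$, the change of variables, and the monotonicity $\norm{\varphi_{\rm int}}_{W_{p,w_h}(\R)}\le\norm{\varphi_{\rm int}}_{W_{p,\alpha}(\R)}$ via Lemma~\ref{thm:phi_int}) is the same as the paper's, and you correctly locate the crux in controlling the sampled sequence $\norm{(\sigma_{1/h}f)[\cdot]}_{\ell_{p,1/w_h}(\Z)}$. But your plan to dispose of that crux by ``simply citing'' the fixed-scale sampling bound of~\cite{NguyenU:2015b} applied to $\sigma_{1/h}f$ has a genuine gap: that route produces the right-hand side $\norm{\sigma_{1/h}f}_{L^{r}_{p,1/w_h}(\R)}$, i.e.\ the weighted norm of $D^r(\sigma_{1/h}f)$, and this does \emph{not} rescale to $h^{-d/p}\norm{f}_{L^{r}_{p,-\alpha}(\R)}$ by your ``likewise for the $L^r$ norm'' change of variables, because the Bessel-type derivative $D^r=\cF^{-1}\{\w{\cdot}{r}\cF(\cdot)\}$ is inhomogeneous and does not commute with dilation: $D^r\sigma_{1/h}\neq \sigma_{1/h}D^r$. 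Comparing $D^r(\sigma_{1/h}f)$ with $\sigma_{1/h}(\fr)$ uniformly in $h$ would require a weighted Fourier-multiplier estimate (for the multiplier $\w{\bomega}{r}/\w{\bomega/h}{r}$), which is precisely the tool the paper states is unavailable for Sobolev weights; so this step cannot be waved through by citation.

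The paper avoids forming $D^r(\sigma_{1/h}f)$ altogether: it writes $f=B_r*\fr$ with $B_r\defeq\cF^{-1}\{\w{\cdot}{-r}\}$, hence $\sigma_{1/h}f=h^d(\sigma_{1/h}B_r)*(\sigma_{1/h}\fr)$, applies the convolution-sampling inequality of~\cite{NguyenU:2015b} so that all the $h$-dependence is pushed into $\norm{\sigma_{1/h}B_r}_{W_{p',w_h}(\R)}$, and then proves the scale-uniform claim $\norm{\sigma_{1/h}B_r}_{W_{p',w_h}(\R)}\le C_{r,\alpha}\,h^{-d}$ for $h\in(0,1)$. This claim is the technical heart of the proof and is not routine bookkeeping parallel to Theorem~\ref{thm:projection}: it does not follow from the monotonicity $w_h\le\w{\cdot}{\alpha}$ (which is all that was needed in the projection case), but requires splitting $\Z$ into the near set $\cS_h=\{\bk:\norm{\bk}\le(\sqrt d+2)/h\}$ and its complement, H\"older's inequality and $|\cS_h|\approx h^{-d}$ on the near part, the exponential decay of the Bessel kernel together with $B_r\in L_{p',\alpha}(\R)$ (this is where $r>d/p$ enters) on the far part, and the lattice estimate $\sum_{\bk\in\Z}\w{h\bk}{\alpha}\e^{-\norm{h\bk}/2}\le C_\alpha h^{-d}$. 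Unless you supply this estimate (or an equivalent $h$-uniform sampling inequality), your argument does not close; with it, your outline coincides with the paper's proof.
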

\begin{proof}
	Let $B_{r} \defeq \cF^{-1}\{\w{\cdot}{-r}\}$ be the kernel associated with the Bessel potential of order $r$.
	Recall from~\cite[Proposition 6.1.5]{Grafakos:2008b} that $B_r(\bx) > 0$, for all $\bx\in\R$, and that
	\begin{align}\label{eq:Bessel_decay}
	B_r(\bx) \leq C_r\,\e^{-\frac{\|\bx\|}{2}}, \quad \forall\, \|\bx\|\geq 2.
	\end{align}
	Moreover,  since $r>d/p$, it is also known~\cite[Proposition 7]{NguyenU:2015b} that $B_r\in L_{p',\alpha}(\R)$.
	
	Let us now define the weight $w_{h}(\bx)\defeq \w{h\bx}{\alpha}$. Recall that $w_h$ is submultiplicative with the same constant $C_{\alpha}$ for all $h>0$. Observe from~\eqref{eq:interp2} that $\Ih =\sigma_{h}\I\sigma_{1/h}$. Therefore, by a change of variable, we have 
	\begin{align}\label{eq:a}
		\wn{\Ih f} &= \wn{\sigma_h\I\sigma_{1/h} f} = h^{d/p}\cdot\norm{\I \left(\sigma_{1/h}f\right)}_{L_{p,1/w_h}(\R)}.
	\end{align}
	We now invoke~\cite[Proposition 4]{NguyenU:2015b} to get
	\begin{align}\label{eq:b}
		\norm{\I \left(\sigma_{1/h}f\right)}_{L_{p,1/w_h}(\R)} \leq C_{\alpha}\cdot \norm{\varphi_{\rm int}}_{W_{p,w_h}(\R)}  \cdot \norm{\left(\sigma_{1/h}f\right)[\cdot]}_{\ell_{p,1/w_h}(\Z)}.
	\end{align}
	Note that, for all $h\in (0,1)$, $w_h(\bx)\leq \w{\bx}{\alpha}$, and so, the quantity $\norm{\varphi_{\rm int}}_{W_{p,w_h}(\R)}$ is bounded since
	\begin{align}\label{eq:c}
		\norm{\varphi_{\rm int}}_{W_{p,w_h}(\R)} \leq \norm{\varphi_{\rm int}}_{W_{p,\alpha}(\R)},
	\end{align}
	which is finite due to Lemma~\ref{thm:phi_int}. On the other hand, since $f=B_r * \fr$, we can write 
	$$
	\sigma_{1/h}f = h^d\cdot                                                                                                                                                                                                                                                                                                                                                                                                                                                                                                                                                                                                                                                                                                                                                                                                                                                                                                                                                                                                                                                                                                                                                                                                                                                                                                                                                                                                                                                                                                                                                                                                                                                                                                                                                                                                                                                                                                                                                                                                                                                                                                                                                                                                                                                                                                                                                                                                                                                                                                                                                                                                                                                                                                                                                                                                                                                                                                                                                                                                                                                                                                                                                                                                                                                                                                                                                                                                                                                                                                                                                                                                                                                                                                                                                                                                                                                                                                                                                                                                                                                                                                                                                                                                                                                                                                                                                                                                                                                                                                                                                                                                                                                                                                                                                                                                                                                                                                                                                                                                                                                                                                                                                                                   \left(\sigma_{1/h}B_r\right)* \left(\sigma_{1/h}\fr\right),
	$$  
	and apply~\cite[Proposition 5]{NguyenU:2015b} to obtain
	\begin{align}
		\norm{\left(\sigma_{1/h}f\right)[\cdot]}_{\ell_{p,1/w_h}(\Z)} &\leq C_{\alpha}\cdot h^d \norm{\sigma_{1/h}B_r}_{W_{p',w_h}(\R)}\cdot \norm{\sigma_{1/h}\fr}_{L_{p,1/w_h}(\R)}\nonumber\\
		&= C_{\alpha}\cdot h^d \norm{\sigma_{1/h}B_r}_{W_{p',w_h}(\R)}\cdot h^{-d/p}\norm{f}_{L^r_{p,-\alpha}(\R)}\label{eq:d},
	\end{align}
	where~\eqref{eq:d} is due to a change of variable and the definition of the Sobolev norm $\norm{\cdot}_{L^r_{p,-\alpha}(\R)}$.
	Combining~\eqref{eq:a}, \eqref{eq:b}, \eqref{eq:c}, and~\eqref{eq:d}, we arrive at
	\begin{align}\label{eq:combine}
	\wn{\Ih f} &\leq C^2_{\alpha}\cdot \norm{\varphi_{\rm int}}_{W_{p,\alpha}(\R)} \cdot h^d \norm{\sigma_{1/h}B_r}_{W_{p',w_h}(\R)}\cdot \norm{f}_{L^r_{p,-\alpha}(\R)}.
	\end{align}
	Hence, the desired bound~\eqref{eq:desired_interp2} will be achieved if 
	\begin{align}\label{eq:claim}
		 \norm{\sigma_{1/h}B_r}_{W_{p',w_h}(\R)} \leq C_{r,\alpha} \cdot h^{-d},\quad \forall h\in (0,1),
	\end{align}
	for some constant $C_{r,\alpha}$. In the rest of the proof, we will show that this claim is true. Let us put $\TT\defeq [0,1]^d$, $\TT_h \defeq [0,h]^d$, and $B_{r,\alpha} \defeq \w{\cdot}{\alpha} B_r$. From the positivity of $B_r$, it is clear that $B_{r,\alpha}(\bx)>0$, $\forall \bx\in\R$. By the definition of the mixed norm, we express
	\begin{align*}
		\norm{\sigma_{1/h}B_r}_{W_{p',w_h}(\R)} &= \norm{\sigma_{1/h} B_{r,\alpha}}_{W_{p'}(\R)}= \norm{\sum_{\bk\in\Z}\left(\sigma_{1/h} B_{r,\alpha}\right)(\cdot+\bk)}_{L_{p'}(\TT)}\\
										&= h^{-d/p'} \norm{\sum_{\bk\in\Z} B_{r,\alpha}(\cdot+h\bk)}_{L_{p'}(\TT_h)}.
	\end{align*}
	Applying Minkowski's inequality, we get
	\begin{align}
	 \norm{\sigma_{1/h}B_r}_{W_{p',w_h}(\R)} &\leq h^{-d/p'}\sum_{\bk\in\cS_h}\norm{ B_{r,\alpha}(\cdot+h\bk)}_{L_{p'}(\TT_h)} + h^{-d/p'}\norm{\sum_{\bk\in\Z\setminus \cS_h} B_{r,\alpha}(\cdot+h\bk)}_{L_{p'}(\TT_h)}\nonumber\\
	&=: A+B,\label{eq:AB}
	\end{align}
	where $\cS_h$ is a subset of $\Z$ defined by
	\begin{align*}
		\cS_h \defeq \left\{\bk\in\Z: \norm{\bk}\leq \frac{\sqrt{d}+2}{h}\right\}.
	\end{align*}
	We complete the proof by showing that both terms $A$ and $B$ in~\eqref{eq:AB} are bounded by $C_{r,\alpha}\, h^{-d}$. It is clear that $|\cS_h|=C h^{-d}$, for some constant $C$. Therefore, by \Hd\ inequality
	\begin{align}
		A &\leq h^{-d/p'}\cdot\left(\sum_{\bk\in\cS_h}1^p\right)^{1/p} \cdot \left(\sum_{\bk\in\cS_h } \norm{ B_{r,\alpha}(\cdot+h\bk)}_{L_{p'}(\TT_h)}^{p'}\right)^{1/p'}\nonumber\\
		&= h^{-d/p'}\cdot |\cS_h|^{1/p} \cdot \left(\sum_{\bk\in\cS_h } \int_{\TT_h} \left| B_{r,\alpha}(\bx+h\bk)\right|^{p'}\d \bx\right)^{1/p'}\nonumber\\
		&\leq  h^{-d/p'}\cdot C\cdot h^{-d/p} \cdot \left( \sum_{\bk\in\Z }\int_{\TT_h} \left| B_{r,\alpha}(\bx+h\bk)\right|^{p'}\d \bx\right)^{1/p'}\nonumber\\
		&\leq  C \cdot h^{-d} \cdot \left( \int_{\R} \left| B_{r,\alpha}(\bx)\right|^{p'}\d \bx\right)^{1/p'}\nonumber\\
		&\leq \underbrace{ C \cdot \norm{B_{r}}_{L_{p',\alpha}(\R)}}_{C_{r,\alpha}}\cdot h^{-d}.\label{eq:constant}
	\end{align}
The constant $C_{r,\alpha}$ in~\eqref{eq:constant} is finite because $B_r \in L_{p',\alpha}(\R)$. We now proceed to bound the term $B$ in~\eqref{eq:AB}. As $h\in (0,1)$, we have that, for all $\bx\in\TT$ and for all $\bk\notin\cS_h$, 
\begin{align*}
	\norm{h\bx + h\bk} \geq h\norm{\bk} - h\norm{\bx}> (\sqrt{d} + 2 )- h\sqrt{d}> 2,
\end{align*}
which, according to~\eqref{eq:Bessel_decay}, implies that 
$$
	B_{r}(h\bx+h\bk)\leq C_r\, \e^{-\norm{h\bx+h\bk}/2}\leq C_r\, \e^{\frac{\norm{h\bx}-\norm{h\bk}}{2}}.
$$
Plugging this bound into the formula of $B$ and using the submultiplicativity of the weight $\w{\cdot}{\alpha}$ and the fact that $h\in (0,1)$, we get	
\begin{align}
B&= \left(\int_{\TT}\left(\sum_{\bk\in\Z\setminus \cS_h} \w{h\bx+h\bk}{\alpha}B_{r}(h\bx+h\bk)\right)^{p'}\d\bx\right)^{1/p'}\nonumber\\
&\leq C_{r,\alpha}\cdot \left(\int_{\TT}\left(\sum_{\bk\in\Z\setminus \cS_h} \w{\bx}{\alpha}\w{h\bk}{\alpha} \e^{\frac{\norm{\bx}-\norm{h\bk}}{2}}\right)^{p'}\d\bx\right)^{1/p'}\nonumber\\
&\leq C_{r,\alpha}\cdot \left(\int_{\TT}\w{\bx}{p'\alpha}\e^{\frac{p'\norm{\bx}}{2}}\d\bx\right)^{1/p'}\cdot\sum_{\bk\in\Z}  \w{h\bk}{\alpha}\e^{-\frac{\norm{h\bk}}{2}}\cdot \label{eq:sum_int}
\end{align}
Since the integral in~\eqref{eq:sum_int} is a constant independent of $h$, we only need to show that the sum is bounded by $C_\alpha\,h^{-d}$. Again, by the submultiplicativity of the weight $\w{\cdot}{\alpha}$ and by the assumption that $h\in (0,1)$, we have 
\begin{align*}
	\int_{\R} \w{\bx}{\alpha}\e^{-\frac{\|\bx\|}{2}}\d\bx &= \sum_{\bk\in\Z} \int_{\TT_h} \w{\bx+h\bk}{\alpha}\e^{-\frac{\|\bx+h\bk\|}{2}}\d\bx\\
	&\geq C_{\alpha}\int_{\TT_h}\w{\bx}{-\alpha}\e^{-\frac{\|\bx\|}{2}} \d\bx\sum_{\bk\in\Z} \w{h\bk}{\alpha}\e^{-\frac{\|h\bk\|}{2}} \\
	&= C_{\alpha}\cdot h^{d}\int_{\TT}\w{h\bx}{-\alpha}\e^{-\frac{\|h\bx\|}{2}} \d\bx \,\sum_{\bk\in\Z} \w{h\bk}{\alpha}\e^{-\frac{\|h\bk\|}{2}}\\
	&\geq C_{\alpha}\cdot h^{d}\int_{\TT}\w{\bx}{-\alpha}\e^{-\frac{\|\bx\|}{2}} \d\bx \,\sum_{\bk\in\Z} \w{h\bk}{\alpha}\e^{-\frac{\|h\bk\|}{2}},
\end{align*}  
which implies
\begin{align*}
\sum_{\bk\in\Z} \w{h\bk}{\alpha}\e^{-\frac{\|h\bk\|}{2}} &\leq C_{\alpha}^{-1} \int_{\R} \w{\bx}{\alpha}\e^{-\frac{\|\bx\|}{2}}\d\bx \left(\int_{\TT}\w{\bx}{-\alpha}\e^{-\frac{\|\bx\|}{2}}\d\bx\right)^{-1} \cdot h^{-d}\\
&=C_{\alpha}\cdot h^{-d}.
\end{align*}
Combining this with~\eqref{eq:sum_int} yields that $B\leq C_{r,\alpha}\, h^{-d}$ which, together with~\eqref{eq:constant}, establishes the claim~\eqref{eq:claim} and therefore completes the proof. 
\end{proof}

In the rest of this section, we state and prove the interpolation counterpart of Theorem~\ref{thm:main}.
\begin{thm}\label{thm:interp_approx}
	Assume that $1 \leq p \leq \infty$, $L\in\Zplus$, $\alpha\geq 0$, and $r>\Frac{d}{p}$. Let $\varphi$ be an element of $W_{p,L+\alpha}(\R)$   that satisfies the Strang-Fix conditions of order $L$. Assume also that $\varphi[\cdot]\in \ell_{1,L+\alpha}(\Z)$ and $\sum_{\bk\in\Z}\varphi[\bk]\e^{-\j\ip{\bomega}{\bk}}$ is nonzero for almost all $\bomega\in\R$. Then, there exists a constant $C_{\varphi,L,\alpha}$ such that, for all continuous functions $f$ in $H^{L,r}_{p,-\alpha}(\R)$,
	\begin{align} \label{eq:desired_interp}
	\wn{f-\Ih f} \leq C_{\varphi,L,\alpha} \cdot h^{L}\cdot \left\|(\fr)^{(L)}\right\|_{L_{p,-\alpha}},
	\end{align}
	when $h\rightarrow 0$.
\end{thm}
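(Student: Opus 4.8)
The plan is to reproduce the three-term splitting behind Theorem~\ref{thm:main}, with the bounded projector $\Ph$ replaced by $\Ih$ and with the Bessel potential of order $r$ bridging the gap between the hypothesis $f\in H^{L,r}_{p,-\alpha}(\R)$ and the quantity $(\fr)^{(L)}$ on the right of~\eqref{eq:desired_interp}. Assume without loss of generality that $h\in(0,1)$, put $g\defeq\Jh f$ with $\Jh$ the order-$L$ smoothing operator~\eqref{eq:smoothing}, and use
\begin{align*}
\wn{f-\Ih f}\leq\wn{f-g}+\wn{\Ih(f-g)}+\wn{g-\Ih g}.
\end{align*}
Because $\Jh$ is a convolution operator it commutes with the Fourier multiplier $D^r$, so $D^r g=\Jh\fr$ and $f-g=D^{-r}(\fr-\Jh\fr)=B_r*(\fr-\Jh\fr)$ with $B_r=\cF^{-1}\{\w{\cdot}{-r}\}\in L_{1,\alpha}(\R)$ (by the exponential decay~\eqref{eq:Bessel_decay}). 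A weighted Young inequality, which follows from the submultiplicativity of $\w{\cdot}{\alpha}$, together with Proposition~\ref{thm:f-Jf} applied to $\fr\in H^L_{p,-\alpha}(\R)$, gives $\wn{f-g}\leq C_{L,r,\alpha}\,h^L\wn{(\fr)^{(L)}}$. For the middle term, $f-g$ is continuous ($g$ is smooth, $f$ is assumed continuous) and lies in $L^r_{p,-\alpha}(\R)$ with $\norm{f-g}_{L^r_{p,-\alpha}(\R)}=\wn{D^r(f-g)}=\wn{\fr-\Jh\fr}$; since $\varphi\in W_{p,L+\alpha}(\R)\subset W_{p,\alpha}(\R)$ and $\varphi[\cdot]\in\ell_{1,L+\alpha}(\Z)\subset\ell_{1,\alpha}(\Z)$, Theorem~\ref{thm:interp} applies and, with Proposition~\ref{thm:f-Jf}, yields $\wn{\Ih(f-g)}\leq C_{\varphi,r,\alpha}\wn{\fr-\Jh\fr}\leq C_{\varphi,L,r,\alpha}\,h^L\wn{(\fr)^{(L)}}$. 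Everything then reduces to the third term.

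The third term is the interpolation counterpart of Proposition~\ref{thm:error_Jf}. Under the hypotheses $\varphi_{\rm int}\in W_{p,L+\alpha}(\R)$ (Lemma~\ref{thm:phi_int}), and since $\varphi$ satisfies the Strang-Fix conditions of order $L$, $\varphi_{\rm int}$ is a quasi-interpolant of order $L$, i.e. it obeys~\eqref{eq:quasi}; writing $\Ih=\sigma_h\I\sigma_{1/h}$ then shows $\Ih$ reproduces every polynomial of degree less than $L$ (the relevant series converges since $\varphi_{\rm int}\in W_{p,L+\alpha}(\R)\subset W_{1,L-1}(\R)$). Taylor-expanding the smooth function $g$ to order $L-1$ about each $\bx\in\R$, with remainder $R_{\bx}$, and invoking this polynomial reproduction gives
\begin{align*}
(g-\Ih g)(\bx)=-\sum_{\bk\in\Z}R_{\bx}(h\bk)\,\varphi_{\rm int}\left(\frac{\bx}{h}-\bk\right).
\end{align*}
Splitting $\R$ into the cubes $h\bk+[0,h]^d$, changing variables, and applying the submultiplicativity of $\w{\cdot}{\alpha}$ and Minkowski's inequality exactly as in~\eqref{eq:submulti3}--\eqref{eq:step1}, the estimate reduces to bounding $\norm{c_{\bx,\bl}}_{\ell_p(\Z)}$, where $c_{\bx,\bl}[\bk]\defeq\w{h\bk}{-\alpha}\left|R_{h\bx+h\bk}(h(\bk-\bl))\right|$.

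To bound that $\ell_p$-norm, I would rewrite the remainder in the integral form of~\eqref{eq:swap}: using that $\Jh$ commutes with shifts and directional derivatives, $R_{h\bx+h\bk}(h(\bk-\bl))=\int_0^1(\Jh T_{\bx,\bl,\tau}f)(h\bk)\,\d\tau$ for a shift-and-directional-derivative operator $T_{\bx,\bl,\tau}$ of the same type as~\eqref{eq:operator}, with directional derivative in the direction $-h(\bx+\bl)$. Minkowski's inequality and Lemma~\ref{thm:J_bounded} then give $\norm{c_{\bx,\bl}}_{\ell_p(\Z)}\leq C_{L,\alpha}\,h^{-d/p}\int_0^1\wn{T_{\bx,\bl,\tau}f}\,\d\tau$, and Lemma~\ref{thm:direction} plus submultiplicativity bound the integrand by $C_{L,\alpha}\,h^L\w{\bx+\bl}{L+\alpha}\wn{f^{(L)}}$ exactly as in~\eqref{eq:lemma2}--\eqref{eq:T}. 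Summing against $|\varphi_{\rm int}(\bx+\bl)|$ and recognising $\norm{\varphi_{\rm int}}_{W_{p,L+\alpha}(\R)}$ (finite by Lemma~\ref{thm:phi_int}) yields $\wn{g-\Ih g}\leq C_{\varphi,L,\alpha}\,h^L\wn{f^{(L)}}$; finally $\partial^{\bl}f=B_r*\partial^{\bl}\fr$ and $B_r>0$ give $f^{(L)}\leq B_r*(\fr)^{(L)}$ pointwise, so $\wn{f^{(L)}}\leq C_{r,\alpha}\wn{(\fr)^{(L)}}$ by the same weighted Young inequality. Adding the three bounds proves~\eqref{eq:desired_interp}. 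The main obstacle is precisely this coupling: unlike $\Ph$, the operator $\Ih$ is \emph{not} bounded on $L_{p,-\alpha}(\R)$, so one cannot absorb $\wn{f-g}$ through the stability of $\Ih$ for free, but must route the ``rough'' part of the error through Theorem~\ref{thm:interp} (this is what consumes the extra $d/p+\varepsilon$ derivatives), and one must check that the point samples $R_{h\bx+h\bk}(h\bk)$, which a priori need not even lie in $\ell_p$, are tamed by the smoothing $\Jh$ through Lemma~\ref{thm:J_bounded}, all while keeping every weight $\w{\cdot}{\alpha}$ under control.
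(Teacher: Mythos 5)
Your proposal is correct and follows essentially the same route as the paper: the same splitting $f-\Ih f=(f-\Jh f)+\Ih(\Jh f-f)+(\Jh f-\Ih\Jh f)$, with the first two terms handled through the commutation of $\Jh$ with $D^r$ (the paper's Proposition~4) and the stability of $\Ih$ on continuous functions of $L^r_{p,-\alpha}(\R)$ (Theorem~3), and the third term handled by the Taylor-remainder/quasi-interpolation argument with $c_{\bx}[\bl]=R_{\bx}(h\bl)$, Lemma~3, Lemma~2 and the weight $\w{\bx+\bl}{L+\alpha}$ absorbed into $\norm{\varphi_{\rm int}}_{W_{p,L+\alpha}(\R)}$, exactly as in the paper's Proposition~5. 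The final passage from $\wn{f^{(L)}}$ to $\wn{(\fr)^{(L)}}$ via $B_r>0$ and weighted Young's inequality is also the paper's argument, so no further comparison is needed.
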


%\begin{thm}
%	Assume that $1 < p < \infty$, $L\in\Zplus$, $\alpha\geq 0$, and $s>L+\Frac{d}{p}+\ceil{\alpha}$. If $\varphi$ is an element of $W_{p,L+\alpha}(\R)$   that satisfies the Strang-Fix conditions of order $L$, then, for all $f\in L^{s}_{p,-\alpha}(\R)$, 
%	\begin{align} 
%	\wn{f-\Ih f} \leq C_{\varphi,L,\alpha} \max(h^{L},h^{L+2\ceil{\alpha}}) \left\|f\right\|_{L^{s}_{p,-\alpha}}.
%	\end{align}
%\end{thm}
Similar to the proof of Theorem~\ref{thm:main}, we divide the proof of Theorem~\ref{thm:interp_approx} into two propositions. 
\begin{prop}\label{thm:f-Jf2}
	For $1\leq p \leq \infty$, $L\in \Zplus$, $\alpha\geq 0$, $r>0$, and $\Jh$ being the smoothing operator defined in~\eqref{eq:smoothing}, there exists a constant $C_{L,\alpha}$ such that, for all $f\in H^{L,r}_{p,-\alpha}(\R)$ and for all $h\in (0,1)$,
	\begin{align*} %\label{eq:bound_f-Jf3}
	\norm{f-\Jh f}_{L^{r}_{p,-\alpha}(\R)} \leq C_{L,\alpha} \cdot\h^{L}\cdot\norm{(\fr)^{(L)}}_{L_{p,-\alpha}(\R)}.
	\end{align*}
\end{prop}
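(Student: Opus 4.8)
The plan is to reduce Proposition~\ref{thm:f-Jf2} to Proposition~\ref{thm:f-Jf} by exploiting the fact that the smoothing operator $\Jh$ is a convolution operator (with kernel $\psi_h$ from~\eqref{eq:smoothingKernel}) and hence commutes with the Fourier multiplier $D^r=\cF^{-1}\{\w{\cdot}{r}\hat{\cdot}\}$. First I would recall the definition of the weighted Sobolev norm: $\norm{g}_{L^r_{p,-\alpha}(\R)}=\wn{D^r g}$. Applying this with $g=f-\Jh f$, I would write
\begin{align*}
\norm{f-\Jh f}_{L^r_{p,-\alpha}(\R)} = \wn{D^r(f-\Jh f)} = \wn{D^r f - \Jh(D^r f)},
\end{align*}
where the last equality uses that $D^r$ and $\Jh$ commute, which in turn follows because $\Jh f = f*\psi_h$ and convolution with a fixed (compactly supported, smooth) kernel commutes with any Fourier multiplier on $\cS'(\R)$. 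One should check that $D^r f \in H^L_{p,-\alpha}(\R)$: this is exactly the hypothesis $f\in H^{L,r}_{p,-\alpha}(\R)$, since membership in that space means $D^r f\in H^L_{p,-\alpha}(\R)$.

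Next I would simply invoke Proposition~\ref{thm:f-Jf} applied to the function $D^r f$ in place of $f$. That proposition gives, for all $h\in(0,1)$,
\begin{align*}
\wn{D^r f - \Jh(D^r f)} \leq C_{L,\alpha}\cdot h^L\cdot\wn{(D^r f)^{(L)}}.
\end{align*}
Since $(D^r f)^{(L)} = (\fr)^{(L)}$ by the notational convention $\fr = D^r f$, the right-hand side is exactly $C_{L,\alpha}\cdot h^L\cdot\norm{(\fr)^{(L)}}_{L_{p,-\alpha}(\R)}$, which is the claimed bound. Combining the two displays completes the proof.

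The main obstacle — really the only technical point worth spelling out — is justifying the commutation $D^r\Jh = \Jh D^r$ rigorously at the level of tempered distributions. Since $\Jh$ is convolution with $\psi_h\in\cS'(\R)$ (in fact $\psi_h$ is a finite linear combination of scaled copies of $\chi\in\bump$, so it is compactly supported and smooth, hence in $\cS(\R)$), and $D^r$ is the Fourier multiplier with symbol $\w{\cdot}{r}$ which is a smooth function of polynomial growth, both operators are well-defined on $\cS'(\R)$ and their composition in either order corresponds to multiplication in the Fourier domain by $\w{\bomega}{r}\widehat{\psi_h}(\bomega)$; since scalar multiplication of functions is commutative, the two compositions agree. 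A secondary minor point is to confirm that $D^r f$ actually lies in $H^L_{p,-\alpha}(\R)$ so that Proposition~\ref{thm:f-Jf} is applicable — but this is the very definition of the hybrid weighted Sobolev space $H^{L,r}_{p,-\alpha}(\R)$ recalled in Section~\ref{sec:prelim}, so no extra work is needed. Everything else is bookkeeping with the definitions.
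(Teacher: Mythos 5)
Your proposal is correct and follows essentially the same route as the paper: the paper writes $f-\Jh f=B_r*(\fr-\Jh\fr)$ with $B_r=\cF^{-1}\{\w{\cdot}{-r}\}$, which is just your commutation $D^r\Jh=\Jh D^r$ phrased through the Bessel potential, and then both arguments conclude by applying Proposition~\ref{thm:f-Jf} to $\fr\in H^L_{p,-\alpha}(\R)$. Nothing is missing.
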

\begin{proof}
	Put  $B_{r}\defeq \cF^{-1}\left\{\w{\cdot}{-r}\right\}$. Since $\Jh$ is a convolution operator, we have the expression
	\begin{align*}
	f - \Jh f = B_r*\fr - \Jh( B_r* \fr) =  B_r*(\fr - \Jh \fr).
	\end{align*} 
	Hence 
	\begin{align}\label{eq:convolution}
		\norm{f-\Jh f}_{L^{r}_{p,-\alpha}(\R)} = \wn{\fr-\Jh \fr}.
	\end{align}
	We now apply Proposition~\ref{thm:f-Jf} to $\fr\in H^{L}_{p,-\alpha}(\R)$ to obtain
	\begin{align}\label{eq:L}
		\wn{\fr-\Jh \fr} \leq C_{L,\alpha} \cdot\h^{L}\cdot\wn{(\fr)^{(L)}}.
	\end{align}
	Putting~\eqref{eq:convolution} and \eqref{eq:L} together completes the proof.

%	On the other hand,
%	\begin{align}
%		\wn{(\fr)^{(L)}} &\leq \sum_{\bl\in\N:|\bl|=L}\wn{\partial^{\bl}\fr}\\
%					&= \sum_{\bl\in\N:|\bl|=L}\wn{\cF^{-1}\left\{(\cdot)^{\bl}\w{\cdot}{r}\hat{f}\right\}}\\
%					&= \sum_{\bl\in\N:|\bl|=L}\wn{\cF^{-1}\left\{\left(\w{\cdot}{s}\hat{f}\right)\cdot m_{\bl}\right\}}\label{eq:invFourier}
%	\end{align}
%	where the Fourier multiplier $m_{\bl}$ is given by $m_{\bl}(\bomega)\defeq \bomega^{\bl}\w{\bomega}{L}$. According to Proposition~\ref{thm:multiplier}, $m_{\bl}$ is bounded in $L_{p,-\alpha}(\R)$ for all $\bl\in\N$ such that $|\bl|\leq L$. This together with~\eqref{eq:invFourier} implies  
%	\begin{align}
%	\wn{(\fr)^{(L)}} &\leq C_{L,\alpha}\cdot \wn{\cF^{-1}\left\{\w{\cdot}{s}\hat{f}\right\}} = C_{L,\alpha}\cdot\norm{f}_{L^s_{p,-\alpha}(\R)}.\label{eq:L_to_s}
%	\end{align}
%	Substituting~\eqref{eq:L_to_s} into~\eqref{eq:L}, we obtain~\eqref{eq:bound_f-Jf3}, completing the proof.
	
%	The final step of the proof is to show~\eqref{eq:bound_f-Jf2}. Recall that $B_r\in L_{1,\alpha}(\R)$. Apply the weighted Young inequality to~\eqref{eq:convolution} to get
%	\begin{align}
%		\wn{f - \Jh f} &\leq  C_{\alpha}\cdot \norm{B_r}_{L_{1,\alpha}(\R)}\cdot  \wn{\fr - \Jh \fr}\\
%						&= C_{r,\alpha} \cdot  \wn{\fr - \Jh \fr},
%	\end{align}
%	which together with~\eqref{eq:bound_f-Jf3} implies~\eqref{eq:bound_f-Jf2}. 
\end{proof}

\begin{prop}\label{thm:error_Jf2}
	Assume that $1\leq p \leq \infty$, $L\in\Zplus$, $\alpha\geq 0$, and $r>0$. Let  $\Jh$ be the smoothing operator defined in~\eqref{eq:smoothing}. If $\varphi$ satisfies the conditions of Theorem~\ref{thm:interp_approx}, there exists a constant $C_{\varphi,r,L,\alpha}$ such that, for all $f\in H^{L,r}_{p,-\alpha}(\R)$ and for all $h\in (0,1)$,
	\begin{align}\label{eq:bound_error_Jf2} 
	\left\|\Jh f-\Ih \Jh f\right\|_{L_{p,-\alpha}(\R)} \leq C_{\varphi,r,L,\alpha}\cdot h^L \cdot \norm{(\fr)^{(L)}}_{L_{p,-\alpha}(\R)}.
	\end{align}
\end{prop}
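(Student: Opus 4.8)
The plan is to mimic the structure of the proof of Proposition~\ref{thm:error_Jf}, replacing the projector $\Ph$ by the interpolation operator $\Ih$ and the direct samples by samples taken after passing through the smoothing kernel. Put $g\defeq\Jh f$, which is continuous (indeed smooth) because $\Jh$ is convolution with $\psi_h\in\bump$, and write $e\defeq g-\Ih g$. Since $\varphi$ satisfies the Strang-Fix conditions of order $L$, the interpolant $\varphi_{\rm int}$ reproduces polynomials of degree $<L$ via~\eqref{eq:quasi}, so $\Ih$ reproduces such polynomials; hence, writing the order-$(L-1)$ Taylor expansion of $g$ about each point $\bx$ with remainder $R_\bx$, the polynomial part is annihilated and we obtain an expansion $e(\bx)=-\sum_{\bl\in\Z}R_\bx(h\bl)\,\varphi_{\rm int}\!\left(\frac{\bx}{h}-\bl\right)$, the interpolation analogue of~\eqref{eq:error_g}--\eqref{eq:coefficients}, where now the "coefficient" at lag $\bl$ is simply the pointwise value $R_\bx(h\bl)$ rather than an integral against $\dual$.

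Next I would run exactly the weighted-$L_p$ bookkeeping of~\eqref{eq:submulti3}--\eqref{eq:step1}: split $\|e\|_{L_{p,-\alpha}}^p$ over the cubes $[0,h]^d+h\bk$, rescale, use the submultiplicativity of $\w{\cdot}{\alpha}$ to move the weight onto the lattice point $h\bk$, introduce the sequences $c_{\bx,\bl}[\bk]\defeq\w{h\bk}{-\alpha}|R_{h\bx+h\bk}(h(\bk-\bl)+h\bk)|$ — more precisely the value of the Taylor remainder of $g$ about $h\bx+h\bk$ evaluated at the grid point, up to the change of index — together with $(\varphi_{\rm int})_\bx[\cdot]=|\varphi_{\rm int}(\bx+\cdot)|$, and apply Minkowski to reduce to controlling $\|c_{\bx,\bl}\|_{\ell_p(\Z)}$ times $\|\varphi_{\rm int}\|_{W_{p,L+\alpha}(\R)}$. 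The latter is finite by Lemma~\ref{thm:phi_int} applied with weight $\w{\cdot}{L+\alpha}$ (using $\varphi\in W_{p,L+\alpha}(\R)$ and $\varphi[\cdot]\in\ell_{1,L+\alpha}(\Z)$). For the former, expand $R$ by the integral form of Taylor's theorem as in~\eqref{eq:swap}, commute $\Jh$ past the shift/derivative operator $T_{\by,\tau}$ (legitimate since $\Jh$ is convolution), and then — this is the crucial point where the interpolation argument diverges from the projection one — instead of Lemma~\ref{thm:J_bounded} I need a bound on the $\ell_{p,1/w_h}$-norm of the \emph{samples} of $\sigma_{1/h}\Jh T_{\by,\tau}f$. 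This is supplied by the sampling estimate~\cite[Proposition 5]{NguyenU:2015b}: writing $\Jh T_{\by,\tau}f=(\Jh T_{\by,\tau}f)$ as $B_r*(D^r\Jh T_{\by,\tau}f)$ and using that $B_r\in L_{p',\alpha}(\R)$ for $r>d/p$, exactly as in the proof of Theorem~\ref{thm:interp}, one converts the discrete samples into an $L^r_{p,-\alpha}$-type norm, and since $\Jh$ commutes with $D^r$ we end up needing $\|T_{\by,\tau}\fr\|_{L_{p,-\alpha}(\R)}$.

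From there the chain~\eqref{eq:lemma2}--\eqref{eq:T} goes through verbatim with $f$ replaced by $\fr$: Lemma~\ref{thm:direction} gives $\|T_{\by,\tau}\fr\|_{L_{p,-\alpha}}\leq C_L(1-\tau)^{L-1}\|h\by-h\bx\|^L\,\|(\fr)^{(L)}(\cdot-\tau h\by-(1-\tau)h\bx)\|_{L_{p,-\alpha}}$, then submultiplicativity, $h,\tau\in(0,1)$, $\bx\in[0,1]^d$, and the elementary inequality $\w{\by-\bx}{\alpha}\|\by-\bx\|^L\leq\w{\by-\bx}{L+\alpha}$ yield $\|T_{\by,\tau}\fr\|_{L_{p,-\alpha}}\leq C_{L,\alpha}h^L\,\w{\by-\bx}{L+\alpha}\,\|(\fr)^{(L)}\|_{L_{p,-\alpha}}$. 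Then $\|c_{\bx,\bl}\|_{\ell_p(\Z)}$ picks up an extra $h^{-d/p}$ (or $h^{-d}\cdot h^{d/p}$, tracking the factors from Proposition~5) and, after the convolution-against-$B_r$ step, a factor $\w{\bx+\bl}{L+\alpha}$ from moving weights, mirroring~\eqref{eq:bound_L1}; summing against $(\varphi_{\rm int})_\bx[\bl]$ and recognizing the $W_{p,L+\alpha}$-norm finishes the estimate, leaving the overall power $h^L$ as desired. The main obstacle, and the place requiring genuine care rather than transcription, is step three: unlike the projection case where Lemma~\ref{thm:J_bounded} directly bounds the $\ell_{p,1/w_h}$-norm of $(\sigma_{1/h}\Jh f)[\cdot]$ in terms of $\|f\|_{L_{p,-\alpha}}$, here the samples of $\Jh T_{\by,\tau}f$ must be controlled, so I must route through $B_r$ and the sampling proposition, and verify that all the $h$-powers ($h^{-d/p}$ from sampling density, $h^d$ from the scaled convolution, $h^L$ from the Taylor remainder) combine correctly and that the $r$-dependence is absorbed into $C_{\varphi,r,L,\alpha}$; one also needs $r>0$ throughout (with $r>d/p$ only invoked at the sampling step, consistent with the hypotheses of Theorem~\ref{thm:interp_approx}).
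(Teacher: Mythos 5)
Your skeleton (Taylor remainder of $g=\Jh f$, polynomial reproduction by $\varphi_{\rm int}$, the bookkeeping of~\eqref{eq:submulti3}--\eqref{eq:step1} with $\varphi_{\rm int}$ in place of $\varphi$, finiteness of $\|\varphi_{\rm int}\|_{W_{p,L+\alpha}(\R)}$ via Lemma~\ref{thm:phi_int}) matches the paper, but the step you single out as "the crucial point where the interpolation argument diverges" is where the proposal goes wrong. You claim that Lemma~\ref{thm:J_bounded} cannot be used for the samples of $\Jh T_{-\bl,\tau}f$ and that one must route through $B_r=\cF^{-1}\{\w{\cdot}{-r}\}$ and the sampling estimate of~\cite[Proposition 5]{NguyenU:2015b}. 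In fact Lemma~\ref{thm:J_bounded} applies verbatim: it bounds $\|(\sigma_{1/h}\Jh F)[\cdot]\|_{\ell_{p,1/w_h}(\Z)}$ by $C_{L,\alpha}h^{-d/p}\|F\|_{L_{p,-\alpha}(\R)}$ for \emph{any} $F\in L_{p,-\alpha}(\R)$ --- the quantity being sampled is always $\Jh$ applied to something --- and the projection proof already uses it with $F=T_{\by,\tau}f$ in~\eqref{eq:c_T}. What this requires is only that $T_{-\bl,\tau}f\in L_{p,-\alpha}(\R)$, i.e.\ $f\in H^L_{p,-\alpha}(\R)$, and that is precisely the paper's (missing in your proposal) first step: from $f=B_r*\fr$, weighted Young's inequality, and $B_r\in L_{1,\alpha}(\R)$ for $r>0$, one gets $\|\partial^{\bl}f\|_{L_{p,-\alpha}(\R)}\leq C_{r,\alpha}\|\partial^{\bl}\fr\|_{L_{p,-\alpha}(\R)}$ for $|\bl|\leq L$, so the whole estimate is first run with $\|f^{(L)}\|_{L_{p,-\alpha}(\R)}$ and only converted to $\|(\fr)^{(L)}\|_{L_{p,-\alpha}(\R)}$ at the very end via~\eqref{eq:constant1}.

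The concrete gap caused by your detour is that it needs $r>d/p$: the identity $\Jh Tf=B_r*(\Jh T\fr)$ helps only if $B_r\in L_{p',\alpha}(\R)$ and if the nontrivial claim~\eqref{eq:claim} on $\|\sigma_{1/h}B_r\|_{W_{p',w_h}(\R)}$ (plus uniform $L_{p,-\alpha}$-boundedness of $\Jh$) is re-established, none of which you address and the first of which fails in general for $0<r\leq d/p$ when $p<\infty$. The proposition, however, is stated for all $r>0$ --- only Theorem~\ref{thm:interp} and the $\Ih(f-g)$ term in Theorem~\ref{thm:interp_approx} need $r>d/p$ --- so your argument proves a strictly weaker statement than claimed, besides being more laborious where a direct appeal to Lemma~\ref{thm:J_bounded} suffices. (Minor transcription slip: the evaluated remainder should be $R_{h\bx+h\bk}(h\bk-h\bl)$, not $R_{h\bx+h\bk}(h(\bk-\bl)+h\bk)$.)
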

\begin{proof}
	We first show that $f\in H^{L}_{p,-\alpha}(\R) $. Indeed, since $f=B_r*\fr$, where $B_{r}\defeq \cF^{-1}\left\{\w{\cdot}{-r}\right\}$, we have the estimate
	\begin{align}
		\norm{\partial^{\bl}f}_{L_{p,-\alpha}(\R)} & = \wn{\partial^{\bl}(B_r*\fr)} = \wn{B_r*\partial^{\bl}\fr}\nonumber\\
		& \leq C_\alpha\cdot\norm{B_r}_{L_{1,\alpha}(\R)}\cdot\wn{\partial^{\bl}\fr}\label{eq:Young}\\
		&= C_{r,\alpha} \cdot\wn{\partial^{\bl}\fr},\quad \forall |\bl|\leq L\label{eq:constant1}
	\end{align}
	where~\eqref{eq:Young} is a consequence of weighted Young's inequality. On the other hand, it was shown in~\cite[Proposition 7]{NguyenU:2015b} that $B_r\in L_{1,\alpha}(\R)$, for $r>0$. This means that the constant $C_{r,\alpha}$ in~\eqref{eq:constant1} is finite, which then implies that $f\in H^L_{p,-\alpha}(\R)$.
	
	Let $R_{\bx}$ be the remainder of the order-($L-1$) Taylor series of the infinitely differentiable function $g\defeq\Jh\f$  about $\bx$. Since $\varphi_{\rm int}$ is a quasi-interpolant of order $L$, $\Ih$ maps every polynomial of order less than $L$ to itself. Following the path of the proof of Proposition~\ref{thm:error_Jf}, we write
	\begin{align*}
		e(\bx)\defeq  g(\bx)-(\Ih g)(\bx) = - \sum_{\bl\in\Z}c_{\bx}[\bl] \varphi_{\rm int}\left(\frac{\bx}{h} - \bl\right),
	\end{align*}
	where the sequence $c_{\bx}$ is redefined as
	\begin{align*}
		c_{\bx}[\bl] \defeq R_{\bx}(\h\bl), \quad \text{for } \bl\in\Z.
	\end{align*}
	Therefore, \eqref{eq:step1} still holds and we only need to estimate $\norm{c_{\bx,\bl}}_{\ell_{p}(\Z)}$, where 
	\begin{align*}
	c_{\bx,\bl}[\bk]\defeq \w{h\bk}{-\alpha}|c_{\h\bx+h\bk}[\bk-\bl]|=\w{h\bk}{-\alpha}|R_{\h\bx+h\bk}(\h\bk-\h\bl)|.
	\end{align*}
	Similarly to~\eqref{eq:swap}, we express
		\begin{align*}
		R_{\h\bx+\h\bk}(\h\bk - \h\bl) 
		&= \int_{0}^{1}\Jh T_{-\bl,\tau} f(\h\bk) \d\tau,
		\end{align*}
		where the operator $T_{\by,\tau}$ is given in~\eqref{eq:operator}. Repeating the manipulations in the proof of Proposition~\ref{thm:error_Jf}, we obtain the counterpart of~\eqref{eq:bound_L1}: 
	\begin{align*}
	\|c_{\bx,\bl}\|_{\ell_{p}(\Z)} 
	&\leq C_{L,\alpha}\cdot h^{-d/p} \left\| f^{(L)}\right\|_{L_{p,-\alpha}(\R)}\w{\bx+\bl}{L+\alpha}.
	\end{align*}
	Substituting this bound into~\eqref{eq:step1}, we end up with
	\begin{align}\label{eq:derivative_L}
	\left\|e\right\|_{L_{p,-\alpha}(\R)} 
	&\leq C_{L,\alpha}\cdot \|\varphi_{\rm int}\|_{W_{p,L+\alpha}(\R)}\cdot h^{L}\cdot\left\| f^{(L)}\right\|_{L_{p,-\alpha}(\R)},
	\end{align}
	where $\|\varphi_{\rm int}\|_{W_{p,L+\alpha}(\R)}$ is a finite constant thanks to Lemma~\ref{thm:phi_int}. Combining~\eqref{eq:derivative_L} and~\eqref{eq:constant1} gives us the desired bound~\eqref{eq:bound_error_Jf2}.
\end{proof}

\begin{proof}[Proof of Theorem~\ref{thm:interp_approx}]
Without loss of generality, assume that $h\in (0,1)$. Let $g\defeq \Jh f$. By the triangle inequality
\begin{align}\label{eq:triangle}
	\wn{f-\Ih f} \leq \wn{f-g} + \wn{\Ih (f -g) } + \wn{g-\Ih g}.
\end{align}
From~Theorem~\ref{thm:interp} and Propositions~\ref{thm:f-Jf2}, the first two terms in the right-hand side of~\eqref{eq:triangle} are bounded as
\begin{align}
	\wn{f-g} + \wn{\Ih (f -g) } &\leq \norm{f-g}_{L^r_{p,-\alpha}(\R)} +C_{\varphi,r,\alpha}\cdot \norm{f-g}_{L^r_{p,-\alpha}(\R)}\nonumber\\
								&\leq C_{\varphi,r,L,\alpha}\cdot h^{L}\cdot \norm{(\fr)^{(L)}}_{L_{p,-\alpha}(\R)},\label{eq:term1_2}
\end{align}
whereas the third term is also bounded, according to Proposition~\ref{thm:error_Jf2}, as
\begin{align}
	\wn{g-\Ih g} \leq C_{\varphi,r,L,\alpha}\cdot h^{L}\cdot \norm{(\fr)^{(L)}}_{L_{p,-\alpha}(\R)}.\label{eq:term3}
\end{align}
Finally, the desired bound~\eqref{eq:desired_interp} is obtained by combining~\eqref{eq:triangle}, \eqref{eq:term1_2} and~\eqref{eq:term3}. 

\end{proof}
 
\section{Proofs of Auxiliary Results}\label{sec:proofs}
\subsection{Proof of Lemma~\ref{thm:Bspline}}
	It is clear that
	\begin{align*}
	\cF\left\{D^{L}_{\bu}f\right\} = \left(\j\ip{\bu}{\cdot}\right)^{L}\hat{f}.
	\end{align*}
	On the other hand, the Fourier transform of the B-spline $\beta^{L-1}$ is given by~\cite{UnserAE:1993a}
	\begin{align*}
	\hat{\beta}^{L-1}(\omega)=\left(\frac{1-\e^{-\j\omega}}{\j\omega}\right)^L.
	\end{align*}
	Therefore, the Fourier transform of the right-hand side (RHS) of~\eqref{eq:desired} is given by
	\begin{align*}
	\cF\{\rm RHS\} &=  \int_{\RR}\cF \left\{D^{L}_{\bu}f(\cdot - t\bu)\right\} \beta^{L-1}(t)\d t\\
	&=  \int_{\RR}\e^{-\j \ip{t\bu}{\cdot}}\cF \left\{D^{L}_{\bu}f\right\} \beta^{L-1}(t)\d t\\
	&=  \left(\j\ip{\bu}{\cdot}\right)^{L}\hat{f} \cdot \int_{\RR}\e^{-\j \ip{\bu}{\cdot}t}  \beta^{L-1}(t)\d t\\
	&=  \left(\j\ip{\bu}{\cdot}\right)^{L}  \hat{\beta}^{L-1}(\ip{\bu}{\cdot}) \hat{f}\\
	&= \left(1-\e^{-\j\ip{\bu}{\cdot}}\right)^L \hat{f},
	\end{align*}
	which is exactly the Fourier transform of the left-hand side of~\eqref{eq:desired}, completing the proof.
\subsection{Proof of Lemma~\ref{thm:direction}}
	The claim is trivial for $L=0$. We now show~\eqref{eq:direction} based on the induction hypothesis that
	\begin{align}\label{eq:induction}
	\left|D^{L-1}_{\bu}f(\bx)\right| \leq \|\bu\|^{L-1}_{\infty} \cdot f^{(L-1)}(\bx),\quad\forall \bx\in\R.
	\end{align}
	By definition of directional derivatives, we have that  
	\begin{align*}
	\left|D^L_{\bu}f(\bx)\right|&=\left|\sum_{i=1}^{d} u_i\,\frac{\partial}{\partial x_i} D^{L-1}_{\bu}f(\bx)\right|\leq \|\bu\|_{\infty}\cdot\sum_{i=1}^{d}\left|D^{L-1}_{\bu}\frac{\partial f}{\partial x_i} (\bx)\right|.
	\end{align*}
	It then follows from~\eqref{eq:induction} that  
	\begin{align*}
	\left|D^L_{\bu}f(\bx)\right|
	&\leq \|\bu\|_{\infty}\cdot  \|\bu\|^{L-1}_{\infty}\cdot\sum_{i=1}^{d} \left(\frac{\partial f}{\partial x_i}\right)^{(L-1)} (\bx)\\
	&\leq  \|\bu\|^{L}_{\infty}\cdot\sum_{i=1}^{d}\sum_{|\bk|=L-1}\left| \partial^{\bk}\left(\frac{\partial f}{\partial x_i}\right) (\bx)\right|\\
	&= \|\bu\|^L_{\infty} \cdot f^{(L)}(\bx),
	\end{align*}
	completing the proof.
\subsection{Proof of Lemma~\ref{thm:J_bounded}}
	It is clear from the definition of $\Jh$ that $\sigma_{1/h}\Jh = \J \sigma_{1/h}$. Then, we write
	\begin{align}\label{eq:convol}
	\sigma_{1/h}\Jh f = J\sigma_{1/h} f = (\sigma_{1/h}f)* \psi, 
	\end{align}
	where the kernel $\psi$ is given by
	\begin{align*}
	\psi \defeq \sum_{n=1}^{\L} (-1)^{n-1}\binom{\L}{n}\frac{\sigma_{n}\chi}{n^d}.
	\end{align*} 
	Since $\chi$ is a compactly supported smooth function, it is easy to see that the kernel $\psi$ given above is an element of the hybrid-norm space $W_{\infty,\alpha}(\R)$, which is clearly a subspace of $W_{p',\alpha}(\R)$. Then, the convolution expression in~\eqref{eq:convol} allows us to invoke~\cite[Proposition 5]{NguyenU:2015b} to obtain
	\begin{align}
	\left\|(\sigma_{\Frac{1}{h}}\Jh f)[\cdot]\right\|_{\ell_{p,1/w_h}(\Z)} &\leq C_\alpha\, \norm{\psi}_{W_{p',w_h}} \cdot\left\|\sigma_{1/h}f\right\|_{L_{p,1/w_h}(\R)}\nonumber\\
	&\leq C_\alpha\, \norm{\psi}_{W_{p',\alpha}} \cdot\left\|\sigma_{1/h}f\right\|_{L_{p,1/w_h}(\R)}\label{eq:bounded_h}\\
	&= C_\alpha\,  \norm{\psi}_{W_{p',\alpha}} \cdot h^{-d/p} \cdot \left\|f\right\|_{L_{p,-\alpha}(\R)}\label{eq:changeVariable},
	\end{align}
	where~\eqref{eq:bounded_h} is due to the assumption that $h \in (0,1)$ and~\eqref{eq:changeVariable} is the result of a change of variable. Putting $C_{L,\alpha}\defeq  C_\alpha \norm{\psi}_{W_{p',\alpha}}$ gives us the desired bound~\eqref{eq:bound_Jf}.
\subsection{Proof of Lemma~\ref{thm:phi_int}}
	Recall that, for $\alpha \geq 0$, the weight $\w{\cdot}{\alpha}$ is submultiplicative and satisfies the Gelfand-Raikov-Shilov condition. Since $\varphi[\cdot]\in \ell_{1,\alpha}(\Z)$ and since $\sum_{\bk\in\Z}\varphi[\bk]\e^{-\j\ip{\bomega}{\bk}}$ is nonzero for almost all $\bomega\in\R$, we are allowed to invoke the weighted version of Wiener's lemma~\cite[Theorem 6.2]{Grochenig:2006} to deduce that the sequence $a$ defined in~\eqref{eq:filter} also belongs to $\ell_{1,\alpha}(\Z)$. Now that $\varphi_{\rm int}$ has the representation~\eqref{eq:phi_int} with $a\in \ell_{1,\alpha}(\Z)$ and $\varphi\in W_{p,\alpha}(\R)$, it must be that $\varphi_{\rm int}\in W_{p,\alpha}(\R)$ as a consequence of~\cite[Lemma 1]{NguyenU:2015b}.
%-------------------------------------------------------------------------------
\bibliographystyle{IEEEtran}
\bibliography{hqn_lib}

\end{document}